\newtheorem{theorem}{Theorem}[section]
\newtheorem{proposition}[theorem]{Proposition}
\newtheorem{corollary}[theorem]{Corollary}
\newtheorem{definition}[theorem]{Definition}
\newtheorem{lemma}[theorem]{Lemma}
\newtheorem{example}[theorem]{Example}
\begin{document}
\title[From permutahedron to  associahedron]
{From permutahedron to  associahedron}

\author[Brady]{Thomas~Brady}
\address{School of Mathematical Sciences\\
Dublin City University\\
Glasnevin, Dublin 9\\
Ireland}
\email{tom.brady@dcu.ie}

\author[Watt]{Colum~Watt}
\address{School of Mathematical Sciences\\
Dublin Institute of Technology\\
Dublin 8\\
Ireland}
\email{colum.watt@dit.ie}

\date{11 April 2008}
\thanks{ 2000 \textit{Mathematics Subject Classification.} Primary
20F55; \, Secondary 05E15.}

\begin{abstract}
For each finite real reflection group $W$, we identify a copy of the
type-$W$ simplicial generalised associahedron
inside the corresponding simplicial permutahedron. This defines a
bijection between
the facets of the generalised associahedron and the elements of
the type $W$ non-crossing partition lattice which is more tractable than
previous
such bijections. We show that the simplicial fan determined by this
associahedron
coincides with the Cambrian fan for $W$.
\end{abstract}

\maketitle

\section{Introduction}
\label{intro}
Let $W$ be an irreducible finite real reflection group of rank $n$ acting on
$\textbf{R}^n$. The \emph{type-$W$ simplicial permutahedron} is the
simplicial complex obtained
by intersecting the unit sphere ${\bf S}^{n-1}$ with the fan defined by the
reflecting hyperplanes of $W$. The \emph{type-$W$ simplicial generalised
associahedron}
is obtained by intersecting the unit sphere ${\bf S}^{n-1}$
with the cluster fan associated to a chosen Coxeter element $c$
of $W$ (see \cite{FZ2}). Since its introduction,
similarities have been noticed
between the local structures of the
generalised associahedron and the corresponding
permutahedron. In the $W=A_n$ case, this relationship
was investigated in \cite{T}. In \cite{RS1}, a combinatorial
isomorphism (which is linear for bipartite factorisations of $c$)
is constructed between the cluster fan
and the Cambrian fan, a certain coarsening of the fan defined by the
reflecting hyperplanes of $W$. In \cite{HLT} it is shown that the
Cambrian fan is the normal fan of a simple
polytope.
\vskip .2cm
This paper offers a different construction, which makes no use
of Coxeter-sorting, of the
$c$-Cambrian fan for a bipartite Coxeter element~$c$. Our approach exhibits
the $c$-Cambrian fan as the fan determined by the image $\mu(AX(c))$
of an isometric copy $AX(c)$ of the simplicial
generalised associahedron under the linear isomorphism $\mu = 2(I-c)^{-1}$
from \cite{BW}. The vertex set of the complex $AX(c)$ consists of the
positive roots and the
first $n$ negative roots relative to the total ordering on roots defined in
\cite{BW}. We show that the codimension one simplices of $\mu(AX(c))$ are
pieces of the
original reflecting hyperplanes and that each facet is a union of
permutahedron facets.
Thus the fan defined by
$\mu(AX(c))$ is a coarsening of the fan determined by the reflection
hyperplanes
and we show that this fan coincides with the $c$-Cambrian fan.
\vskip .2cm
The set of facets of $\mu(AX(c))$ and the non-crossing
partition lattice, $\mbox{NCP}_c$, are equinumerous (see, for example,
\cite{ABMW}).
In the current setting, this can be shown with the following
easily described bijection.
\vskip .2cm
For each $w \in \mbox{NCP}_c$, define a region $F(w)$ in ${\bf R}^n$ as
follows.
Let $\{\delta_1, \dots , \delta_k\}$ be the simple system for the
parabolic subgroup
determined by $w$ (with reflection set consisting
of those reflections whose fixed hyperplanes contain the fixed subspace of
$w$)
and let $\{\theta_1, \dots , \theta_{n-k}\}$ be the simple system for the
parabolic subgroup determined by $cw^{-1}$. Now set
\[F(w) = \{x \in {\bf R}^n \mid x \cdot \delta_i \le 0 \ \ \mbox{ and }
x \cdot \theta_j \ge 0 \}. \]
 Our main theorem is the following.
\begin{theorem}
The collection $\{F(w) \mid w \in \mbox{NCP}_c\}$ is the set of facets of a
complete simplicial fan. Moreover, this fan is linearly isomorphic to the
corresponding cluster fan.
\label{thm0}
\end{theorem}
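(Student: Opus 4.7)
My strategy is to obtain the collection $\{F(w)\}$ as the image under the linear isomorphism $\mu = 2(I-c)^{-1}$ of the cluster fan realised on the isometric copy $AX(c)$ of the simplicial generalised associahedron introduced in the introduction. Since $\mu$ is a linear isomorphism of $\mathbf{R}^n$, it carries a complete simplicial fan to a complete simplicial fan that is linearly isomorphic to it. It therefore suffices to exhibit, for each $w \in \mbox{NCP}_c$, a facet $\sigma_w$ of $AX(c)$ satisfying $\mu(\conv(\{0\} \cup \sigma_w)) = F(w)$; the assertion that $\{F(w)\}$ is the facet set of a complete simplicial fan linearly isomorphic to the cluster fan is then immediate.

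The first step is to pin down the bijection between facets of $AX(c)$ and $\mbox{NCP}_c$ in a form suitable for computation. Guided by the two simple systems $\{\delta_1,\dots,\delta_k\}$ for the parabolic subgroup of $w$ and $\{\theta_1,\dots,\theta_{n-k}\}$ for the parabolic subgroup of $cw^{-1}$, I expect the vertex set of $\sigma_w$ to split canonically into $n$ roots: a $k$-subset supported in the reflection subsystem of $w$ and an $(n-k)$-subset supported in the reflection subsystem of $cw^{-1}$, each chosen so that the negative roots appearing agree with the first $n$ negative roots in the total order of \cite{BW}.

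The second step, which I expect to be the main obstacle, is the explicit verification that
\[ \mu(v) \cdot \delta_i \le 0 \qquad \text{and} \qquad \mu(v) \cdot \theta_j \ge 0 \]
for every vertex $v$ of $\sigma_w$ and all $i,j$, with equality in exactly $n-1$ of these $n$ inequalities at each vertex. This will identify $\mu(\sigma_w)$ as a simplicial cone bounded by precisely the $n$ hyperplanes appearing in the definition of $F(w)$, forcing $\mu(\conv(\{0\} \cup \sigma_w)) = F(w)$. The computation should exploit the formula $\mu = 2(I-c)^{-1}$ together with the observation that $c$ preserves each of the two parabolic subspaces, acting there as the corresponding Coxeter element; equivalently, $w$ fixes the span of $\{\theta_j\}$ while $cw^{-1}$ fixes the span of $\{\delta_i\}$. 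The sign information, and in particular the strict inequalities, should follow from the classification of which positive versus negative roots occur as vertices of $\sigma_w$, via the total order on roots from \cite{BW}.

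Once this identification is in place, the proof is complete: the $\sigma_w$ are the facets of a simplicial complex whose associated fan is the cluster fan, and applying $\mu$ turns this structure into a complete simplicial fan whose facets are exactly the $F(w)$ and which is linearly isomorphic to the cluster fan by construction.
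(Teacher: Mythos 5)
Your plan follows the paper's own proof: the paper likewise realises each $F(w)$ as the $\mu$-image of the cone on a facet of $AX(c)$, identifies that facet via the forward/backward-vertex bijection (Corollary~\ref{c:AXbij}, the facet's vertices being the first facet of $X(cw^{-1})$ together with the $c$-images of the vertices of the last facet of $X(w)$), and then carries out exactly the inner-product verification you anticipate in Proposition~\ref{p:vertices}, where each ray generator is shown to lie on $n-1$ of the walls and strictly on the correct side of the remaining one. The only corrections to your sketch are that the $k$ vertices attached to $w$ lie in $c(M(w))$ rather than in the reflection subsystem of $w$ itself (and $F(w)$ is the cone on $\mu(\sigma_w)$, not a convex hull with $0$); the two steps you defer — the explicit bijection and the sign computation — are precisely the content of the paper's Sections 4 and 5.
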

\paragraph{\textbf{Note:}} The recent paper \cite{RS2} defines cones for a general
(not necessarily
finite) $W$ via Coxeter-sortable elements. These cones should coincide with
the facets $F(w)$ for finite $W$ and bipartite $c$.

\section{Preliminaries}
\label{pre}
Fix a fundamental chamber $C$ for the action of $W$ on ${\bf R}^n$, denote the
inward unit normals by $\alpha_1, \dots, \alpha_n$ and let $R_1, \dots , R_n$ be the
corresponding reflections.  Assume that $S_1 = \{\alpha_1, \dots , \alpha_s\}$
and $S_2 = \{\alpha_{s+1}, \dots , \alpha_n\}$ are orthonormal
sets. Let $c = R_1 R_{2} \dots R_n$ be the corresponding
bipartite Coxeter element.
Letting ${\bf T}$ be the set of all reflections in $W$, the total reflection length
function on $W$ is defined by
\[\ell(w) = \min \{k > 0 \mid w = T_1T_2\dots T_k, T_i \in {\bf T}\} .\]
We recall from \cite{BW} that $\ell(w)$ is the dimension of $M(w)$, the orthogonal complement of
the fixed subspace of the orthogonal transformation $w$.
The total reflection order on $W$ is defined by
\[u \preceq w \ \ \mbox{ if and only if } \ \ \ \ell(u) + \ell(u^{-1}w) = \ell(w)\]
and the set of $W$-noncrossing partitions,  $\mbox{NCP}_c$, is defined to be the
subset of $W$ consisting of those elements $w$ satisfying $w \preceq c$.
Associated to each $w \in \mbox{NCP}_c$ is a parabolic subgroup
$W_{w}$, which is the finite reflection group with reflection set consisting
of those $T\in {\bf T}$ with $T\preceq w$.
The $W$ fundamental domain $C$ lies in a unique chamber for the action
of $W_{w}$ on $\textbf{R}^n$ and hence determines a simple system $\Pi_{w}$
for $W_{w}$.
\vskip .2cm
In \cite{BW} , a total order, $\le $, on the roots (vectors of the form
$w(\alpha_i)$ for $w \in W$ and $1 \le i \le n$)
is defined, following \cite{S},   by
\[\rho_i = R_1R_2\dots R_{i-1} (\alpha_i),\]
with $R_j$ and $\alpha_i$ defined cyclically modulo $n$.   Furthermore a
simplicial complex $EX(c)$ is constructed with vertex set
\[\{\rho_{-n+s+1}, \dots, \rho_0, \rho_1, \dots , \rho_{nh/2},
\rho_{nh/2+1}, \dots , \rho_{nh/2+s}\}\]
(where $\rho_{-k} = \rho_{nh-k}$) and a  simplex on each subset
$\{\tau_1, \tau_2, \dots , \tau_k\}$ of the vertices satisfying
\[\tau_1 < \tau_2 < \dots < \tau_k \  \ \ \ \mbox{ and } \ \ \ \
\ell(R(\tau_1)\dots R(\tau_k)\gamma) = n-k.\]
It is shown in \cite{BW} that $EX(c)$ coincides with the type-W
generalised associahedron.   We will continue to use the  notation from \cite{BW}.  In particular, $X(w)$ will
denote  the subcomplex of $EX(c)$ consisting of those simplices whose vertices
are positive roots in the subspace $M(w)$ for $w \in \mbox{NCP}_c$ and $\mu$ will
denote  the linear operator $2(I-c)^{-1}$.  We recall that
if $\tau$ is a root of unit length then $\mu(\tau)$ is the unique vector in
the fixed subspace of the length $n-1$ element $R(\tau)c$ satisfying
$\mu(\tau)\cdot \tau = 1$.  Furthermore, $\{\mu(\rho_1), \dots , \mu(\rho_n)\}$
is the dual basis to $\{\alpha_1, \dots , \alpha_n\}$ and
$c[\mu(\rho_i)] = \mu(\rho_{i+n})$.
\section{The intermediate complex $AX(c)$.}
\label{AX(c)}
Since $S_1$ and $S_2$ are orthonormal
sets, $c$ factors as a product of two involutions, $c = c_+c_-$, where
\[c_+ = R(\alpha_1)\dots R(\alpha_s)\ \ \mbox{and} \ \
c_- = R(\alpha_{s+1}\dots R(\alpha_n).\]
\begin{definition}
We define the simplicial  complex $AX(c)$ to be the result of applying
the involution $c_+$ to $EX(c)$.
\label{d:AX}
\end{definition}
The vertices and simplices of $AX(c)$ have the following characterisation.
\begin{proposition}
The simplicial complex $AX(c)$ has vertex set
\[\{\rho_1, \dots, \rho_{nh/2+n}\},\]
and a simplex on $\{\tau_1, \dots, \tau_k\}$ provided
\[\rho_1 \le \tau_1 < \tau_2 < \dots <\tau_k \le \rho_{nh/2+n}\ \
\mbox{and} \ \ \l[R(\tau_1)\dots R(\tau_k)c] = n-k.\]
\label{p:AX}
\end{proposition}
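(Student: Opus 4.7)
The plan is to exploit the definition $AX(c) = c_+(EX(c))$, reading off the vertex set and the simplex condition of $AX(c)$ from the known description of $EX(c)$ by tracking $c_+$ on Steinberg's cyclic sequence of roots.

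The key technical step is a formula for the action of $c_+$ on the sequence $\{\rho_i\}$, which I expect to take the form $c_+(\rho_i) = \rho_{K-i}$ (indices modulo $nh$) with $K = nh/2+s+1$. To establish this, I would first use the bipartite identity $c_+ c c_+ = c_- c_+ = c^{-1}$ together with $c(\rho_i) = \rho_{i+n}$ to show that the function $\phi$ defined by $c_+(\rho_i) = \rho_{\phi(i)}$ satisfies $\phi(i+n) = \phi(i) - n$; this reduces the verification to $i = 1, \dots, n$. For such $i$ the orthonormality of $S_1$ and $S_2$ is decisive: for $1 \le i \le s$ it gives $\rho_i = \alpha_i$ and $c_+(\alpha_i) = -\alpha_i$, while for $s+1 \le i \le n$ it gives $\rho_i = c_+(\alpha_i)$, so $c_+(\rho_i) = \alpha_i$. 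What remains is to locate these $\pm\alpha_i$ in the cyclic sequence, namely to check that $\rho_{nh/2+s+1-i}$ equals $-\alpha_i$ for $1 \le i \le s$ and $\alpha_i$ for $s+1 \le i \le n$. This I would obtain from the properties of the Steinberg sequence near its midpoint $\rho_{nh/2}$, as developed in \cite{BW,S}.

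Granted the formula, the vertex-set assertion is immediate: the map $i \mapsto K-i$ is an order-reversing bijection between $\{-n+s+1, \dots, nh/2+s\}$ and $\{1, \dots, nh/2+n\}$. For the simplex condition, a simplex $\tau_1 < \cdots < \tau_k$ of $EX(c)$ produces, after reordering, the tuple $\sigma_j := c_+(\tau_{k+1-j})$ in strict increasing order. Using the standard $R(c_+(\tau)) = c_+ R(\tau) c_+$, the telescoping $c_+^2 = 1$, and $c_+ c = c^{-1} c_+$, I compute
\[
R(\sigma_1) \cdots R(\sigma_k)\, c \;=\; c_+\, R(\tau_k) \cdots R(\tau_1)\, c_+ c \;=\; c_+\, \bigl[R(\tau_k) \cdots R(\tau_1)\, c^{-1}\bigr]\, c_+ .
\]
This element is conjugate to $R(\tau_k) \cdots R(\tau_1)\, c^{-1}$, whose reflection length is, by invariance of $\ell$ under inversion and cyclic conjugation, equal to $\ell(R(\tau_1) \cdots R(\tau_k)\, c)$. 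Hence the two simplex conditions are equivalent, and the simplex description for $EX(c)$ from \cite{BW} transfers verbatim, with the total order now inherited through the reversed indexing.

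The single genuine obstacle is the identification of $\pm \alpha_i$ with specific $\rho_j$'s in the Steinberg sequence near $\rho_{nh/2}$; everything else is algebraic bookkeeping with the factorisation $c = c_+ c_-$ and the class-function property of the reflection length.
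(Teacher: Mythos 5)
Your overall strategy---transporting the description of $EX(c)$ through the involution $c_+$ by means of $c_+cc_+=c^{-1}$, $c(\rho_i)=\rho_{i+n}$ and the conjugation/inversion invariance of $\ell$---is the same as the paper's. However, your key technical formula $c_+(\rho_i)=\rho_{K-i}$ for a single constant $K$ is false whenever $s\ge 2$ or $n-s\ge 2$, hence for every $W$ of rank at least $3$. Indeed, for $1\le i\le s$ one has $\rho_i=\alpha_i$ and $c_+(\rho_i)=-\alpha_i=\rho_{nh/2+i}$ (using $\rho_{nh/2+j}=-\rho_j$), so the putative constant would have to equal $nh/2+2i$ for each such $i$ simultaneously. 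The correct statement, which is what the paper proves, is that $c_+$ reverses the cyclic order of the orthonormal blocks $\pm c^k(S_1)$, $\pm c^k(S_2)$ into which the Steinberg sequence decomposes, while acting order-preservingly (by shifting the index by a block-dependent constant) \emph{within} each block. No property of the sequence near $\rho_{nh/2}$ can rescue your formula, because the failure already occurs inside the block $S_1=\{\rho_1,\dots,\rho_s\}$; this is not the "single genuine obstacle" you identify but a false statement.

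The error propagates to your treatment of simplices: the reordered tuple $\sigma_j=c_+(\tau_{k+1-j})$ need not be strictly increasing when consecutive $\tau$'s lie in a common block, so the (otherwise correct) telescoping computation establishes the length condition for a possibly wrong ordering of the factors. The repair is exactly the observation the paper makes and you omit: order violations can only occur between roots in the same block, such roots are orthogonal, hence their reflections commute and the product $R(\sigma_1)\cdots R(\sigma_k)$ is unchanged by sorting. With that inserted, both the vertex-set claim (which survives at the level of sets) and the simplex claim go through; as a side benefit, your direct handling of $k$-element subsets would then avoid the paper's appeal to the fact that $EX(c)$ and $AX(c)$ are determined by their $1$-skeletons.
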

\begin{proof}
Since $c_+$ and $c_-$ are involutions, it
follows that $c_+c c_+ = c^{-1}$ and hence that
\begin{eqnarray}
c_+(c^k(S_1)) = c_+(c^k)c_+c_+(S_1) &=& -c^{-k}(S_1)\label{action1}\\
c_+(c^k(S_2)) = c_+(c^k)c_+c_+c_-c_-(S_2) &=& -c^{1-k}(S_2)
\label{action2}
\end{eqnarray}
The cyclically ordered set of roots
\[\{\rho_{-n+s+1}, \dots, \rho_0, \rho_1, \dots , \rho_{nh/2},
\rho_{nh/2+1}, \dots , \rho_{nh/2+s}, \dots , \rho_{-n+s}\}\]
is partitioned into the cyclically ordered sequence of subsets
\[-S_2, S_1, c(-S_2), c(S_1), \dots , c^{-1}(-S_1), S_2, -S_1, \dots , c^{-1}(S_1).\]
It follows from (\ref{action1}) and (\ref{action2}) that the action of $c_+$ on
the subsets is
\[-S_2 \leftrightarrow c(S_2), S_1 \leftrightarrow -S_1, c(-S_2) \leftrightarrow S_2,
\dots \]
as shown in figure \ref{f:J1action}.  Consequently, the vertex set of $AX(c)$
is the set of roots $\{\rho_1, \rho_2, \dots , \rho_{n+nh/2}\}$.
\vskip .2cm
\begin{figure}[ht]
\begin{picture}(200,250)(0,-20)
\put(90,0){$-S_1$}
\put(100,180){$S_1$}
\put(50,170){$-S_2$}
\put(40,10){$c(S_2)$}
\put(135,10){$S_2$}
\put(130,170){$c(-S_2)$}
\put(100,100){\vector(0,1){65}}
\put(100,100){\vector(0,-1){65}}
\put(140,100){\vector(0,1){55}}
\put(140,100){\vector(0,-1){55}}
\put(60,100){\vector(0,1){55}}
\put(60,100){\vector(0,-1){55}}
\put(145,100){$c_+$ action}
\qbezier[30](25,150),(-20,100),(25,50)
\qbezier[30](175,150),(220,100),(175,50)
\end{picture}
\caption{The $c_+$ action on subsets.}
\label{f:J1action}
\end{figure}
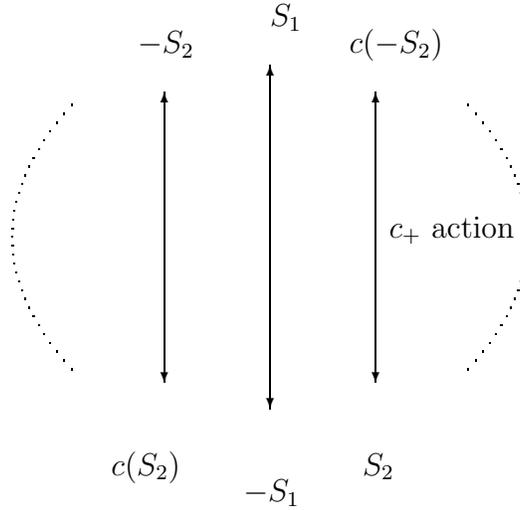
\vskip .2cm
Next suppose $\tau$ and $ \sigma$ are vertices of $AX(c)$ with
$\tau < \sigma$.  We will show that an edge in $AX(c)$ joins
$\tau$ and $\sigma$ if and only if
\[c = R(\sigma)R(\tau)x \ \ \ \mbox{for some $x \in W$ with} \ \ \ell(x) = n-2.\]
Indeed, by definition, an edge in $AX(c)$ joins
$\tau$ and $\sigma$  if and only if an edge in $EX(c)$ joins
$c_+(\tau)$ and $c_+(\sigma)$ and this holds if and only if either
\vskip .2cm
(i) $c_+(\sigma) < c_+(\tau)$ and $c = R(c_+(\tau))R(c_+(\sigma))y$ for some
$y \in W$ with $\ell(y) = n-2$, or
\vskip .2cm
(ii) $c_+(\tau) < c_+(\sigma)$ and $c = R(c_+(\sigma))R(c_+(\tau))y$ for some
$y\in W$ with $\ell(y) = n-2$.
\vskip .2cm
Since the $c_+$ action inverts the cyclic order on the subsets $\pm c^k( S_i)$,
the relation $c_+(\tau) < c_+(\sigma)$ can only occur when $ \tau$ and $\sigma$
belong to the same subset $\pm c^k(S_j)$.  Because these subsets are
orthogonal, it follows that
$\tau$ and $\sigma$ are joined by an edge in $AX(c)$ if and only if
\[c = R(c_+(\tau))R(c_+(\sigma))y \ \ \ \mbox{for some $y$ with} \ \ \ell(y) = n-2.\]
However, using the fact that the set of reflections in $W$ is closed
under conjugation we deduce that $c = R(c_+(\tau))R(c_+(\sigma))y$ is equivalent to
\[ c^{-1} = c_+cc_+ = R(\tau)R(\sigma)z = t R(\tau)R(\sigma)\]
which in turn is equivalent to $c = R(\sigma)R(\tau)x$,
where $y$, $z$ and $t$ are length $n-2$ elements in $W$,
$z$ is conjugate to $y$, $t$ is conjugate to $z$ and $x = t^{-1}$.
In particular, $c = R(c_+(\tau))R(c_+(\sigma))y$ for some $y$ with $\ell(y) = n-2$
if and only if $c = R(\sigma)R(\tau)x$ for some $x$ with $\ell(x) = n-2$.
This establishes the characterisation of edges in $AX(c)$.
As both $EX(c)$ and $AX(c)$ are determined by their $1$-skeletons,
the proposition follows.
\end{proof}
\section{Vertex type revisited.}
\label{AXmap}
In this section we construct a bijection between facets of $AX(c)$ and elements
of $\mbox{NCP}_c$ by partitioning
the vertices of each facet $F$ of $AX(c)$ into
forward and backward vertices in a manner similar to the way vertices
of facets are partitioned into right and left vertices in \cite{ABMW}.
The two notions of vertex type in a facet are different.  We choose the one below
because we can give a uniform characterisation of both forward vertices and
backward vertices of facets.
\vskip .2cm
In this section $F$ will be
a facet of $AX(c)$ with ordered vertices $\tau_1 < \tau_2 < \dots < \tau_n$ so that
$c = R(\tau_n) R(\tau_{n-1}) \cdots R(\tau_1)$ .
\vskip .2cm
\begin{definition}
For $1 \le i \le n$ we define the noncrossing partitions
\[ \begin{tabular}{rcl}
$u_i = u_i (F)$ & $\! \! =$ & $\! \! R(\tau_n) R(\tau_{n-1}) \cdots R(\tau_i)$ \\
$v_i = v_i (F)$ & $\! \! =$ & $\! \! R(\tau_i)\cdots R(\tau_2)R(\tau_1) $
\end{tabular} \]
and say that $\tau_i$ is a \textbf{forward} vertex in $F$ if $\tau_i$ is a vertex of the
first facet of $X(v_i)$.  Otherwise, we say that $\tau_i$ is a \textbf{backward} vertex
in $F$.
\label{def:type}
\end{definition}
\begin{lemma}
(i)  If $\tau_i \in \{\rho_1, \dots , \rho_n\}$, then $\tau_i$ must be
a forward vertex of $F$.  \\
(ii)  If $\tau_i \in \{\rho_{nh/2+1}, \dots , \rho_{nh/2+n}\}$, then $\tau_i$
must be a backward vertex of $F$.  \\
\label{lem:easyrightleft}
\end{lemma}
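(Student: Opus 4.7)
The plan is to dispatch (ii) quickly and do the real work in (i).

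For (ii), the statement is essentially immediate. The final $n$ vertices of $AX(c)$, namely $\rho_{nh/2+1}, \ldots, \rho_{nh/2+n} = c(\rho_1), \ldots, c(\rho_n)$, are all negative roots: they lie in the negative half of the cyclic BW ordering. Since $X(v_i)$ is defined as the subcomplex of $EX(c)$ whose vertices are the \emph{positive} roots lying in $M(v_i)$, such a $\tau_i$ is not a vertex of $X(v_i)$ at all, let alone of its first facet, and so $\tau_i$ is backward by the definition.

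For (i), fix $\tau_i = \rho_j$ with $1 \le j \le n$. I would first invoke the \cite{BW} characterisation of the first facet of $X(v_i)$ as the facet whose $i$ vertices are the $i$ smallest positive roots of $M(v_i)$ in the total order $\le$. Since $j \le n \le nh/2$, the root $\tau_i$ is positive; since the predecessors $\tau_1 < \cdots < \tau_{i-1}$ all lie in $\{\rho_1, \ldots, \rho_{j-1}\}$, they too are positive. Because $v_i^{-1} = R(\tau_1)\cdots R(\tau_i)$ is a reduced reflection factorisation in $W_{v_i}$, each $\tau_k$ lies in $M(v_i)$. Thus $\{\tau_1, \ldots, \tau_i\}$ already provides $i$ positive roots of $M(v_i)$ whose maximum is $\tau_i = \rho_j$, and the lemma reduces to the claim that no additional positive root of $M(v_i)$ lies strictly below $\rho_j$.

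The crucial input is the near-simple shape of $\rho_j$ when $j \le n$: either $\rho_j = \alpha_j$ (if $j \le s$) or $\rho_j = c_+(\alpha_j)$ (if $s < j \le n$), so that every positive root smaller than $\rho_j$ has the same form and belongs to the initial segment $\{\rho_1, \ldots, \rho_{j-1}\}$. Combining this with the identity $R(\rho_1)\cdots R(\rho_n) = c^{-1}$ (which singles out $\{\rho_1, \ldots, \rho_n\}$ as the initial facet of $EX(c)$) and with the simplex condition $\ell(R(\tau_1)\cdots R(\tau_i)c) = n - i$ from Proposition~\ref{p:AX}, I would transfer the problem through the involution $c_+$ back into $EX(c)$, where the initial facet has a transparent description via the $c_+$-action on vertex subsets worked out in the proof of Proposition~\ref{p:AX}. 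A length-additivity count in $W_{v_i}$, exploiting the bipartite factorisation $c = c_+ c_-$, should then force every positive root of $M(v_i)$ lying strictly below $\rho_j$ to occur as one of the $\tau_k$. This final exclusion step is the main obstacle: a priori nothing prevents a stray positive root of $M(v_i)$ from slipping in below $\rho_j$, so the bookkeeping between the $c_+$-action and the parabolic $W_{v_i}$ is where the real content of the argument will sit.
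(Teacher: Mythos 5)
Your part (ii) is correct and is exactly the paper's argument: the roots $\rho_{nh/2+1},\dots ,\rho_{nh/2+n}$ are negative, every vertex of $X(v_i)$ is a positive root, so a negative $\tau_i$ cannot lie in the first facet of $X(v_i)$. (One small slip: this set of roots is $(-S_1)\cup c(S_2)=\{-\rho_1,\dots ,-\rho_n\}$, not $\{c(\rho_1),\dots ,c(\rho_n)\}$ --- since $c(\rho_i)=\rho_{i+n}$, the latter is $\{\rho_{n+1},\dots ,\rho_{2n}\}$. This does not affect the argument, which only needs negativity.)

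Part (i), however, has a genuine gap. You correctly reduce the statement to the exclusion claim --- that no positive root of $M(v_i)$ other than $\tau_1,\dots ,\tau_i$ precedes $\rho_j$ --- but you then leave that claim unproved, saying a ``length-additivity count'' exploiting the bipartite factorisation ``should'' force it, and you explicitly label it ``the main obstacle.'' That claim \emph{is} the content of the lemma, and the machinery you propose (transferring through $c_+$ back to $EX(c)$ and bookkeeping against $W_{v_i}$) is both unnecessary and not obviously going to close the gap. The paper's resolution is a two-line dimension count: any positive root $\rho_k<\rho_j$ lies in $\{\rho_1,\dots ,\rho_{j-1}\}\subseteq\{\rho_1,\dots ,\rho_n\}$, and the first $n$ roots are linearly independent (for instance because $\{\mu(\rho_1),\dots ,\mu(\rho_n)\}$ is the dual basis to $\{\alpha_1,\dots ,\alpha_n\}$ and $\mu$ is invertible); hence a stray $\rho_k\in M(v_i)\setminus\{\tau_1,\dots ,\tau_i\}$ with $\rho_k<\rho_j$ would give $i+1$ linearly independent vectors inside $M(v_i)$, contradicting $\dim M(v_i)=\ell(v_i)=i$. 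A further caution: the ``characterisation'' you invoke, that the first facet of $X(w)$ consists of the $\ell(w)$ smallest positive roots of $M(w)$, is not a result of \cite{BW} or \cite{ABMW} and is not needed here; what is true, and suffices, is that $\{\tau_1,\dots ,\tau_i\}$ is a facet of $X(v_i)$ (because $v_i=R(\tau_i)\cdots R(\tau_1)$ with the $\tau$'s increasing), so once it is shown to exhaust the positive roots of $M(v_i)$ up to $\tau_i$ it is automatically the lexicographically first facet.
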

\begin{proof}  (i)  Suppose $\tau_i = \rho_j$ is one of the first $n$ roots.
We claim that
\[M(v_i) \cap \{\rho_1, \rho_2, \dots , \rho_j = \tau_i\} = \{\tau_1, \dots ,\tau_i\}.\]
Indeed the inclusion $\{\tau_1, \dots , \tau_i\}\subseteq M(v_i) \cap
\{\rho_1, \rho_2, \dots , \rho_j = \tau_i\}$ follows from the definition of $v_i$ and
the ordering of the $\tau$'s.   If the reverse inclusion did not hold we
would have $R(\rho_k) \preceq v_i$ for some $\rho_k$
satisfying
\[\rho_k < \rho_j = \tau_i \ \ \ \ \mbox{and} \ \ \ \rho_k \not\in \{\tau_1, \dots , \tau_i\}.\]
However, since the first $n$ roots are linearly independent,
$\{\tau_1, \dots , \tau_i, \rho_k\}$ would be a set of $i+1$
linearly independent vectors in $M(v_i)$ contradicting $\ell(v_i) = i$.  Thus the
above equality of sets holds and the first facet of $v_i$ is forced to have vertex set
$\{\tau_1, \dots , \tau_i\}$.  In particular, $\tau_i$ is a forward vertex of $F$.
\vskip .2cm
(ii)  The first facet of $X(w)$ is necessarily a set of positive roots
for any $w \in \mbox{NCP}_c$.   Thus any vertex of a facet $F$ which is also a
negative root must be a backward vertex of $F$.
\end{proof}
\begin{lemma}  The root $\tau_i$ is a forward vertex of $F$
if and only $v_i^{-1}(\tau_i)$ is a negative
root.
\label{lem:pushright}
\end{lemma}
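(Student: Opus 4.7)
My plan is to deduce the lemma from the following identification, which I call~$(\ast)$:
\emph{The first facet of $X(v_i)$ has vertex set equal to the first $i$ positive roots of $M(v_i)$ in the BW total order, and this set coincides with the set of positive roots $\beta \in M(v_i)$ satisfying $v_i^{-1}(\beta) < 0$.}
Before using $(\ast)$, I would record the identity $v_i^{-1}(\tau_i) = v_{i-1}^{-1} R(\tau_i)(\tau_i) = -v_{i-1}^{-1}(\tau_i)$ coming from $v_i = R(\tau_i) v_{i-1}$; since $v_{i-1}$ preserves $M(v_i)$, both $\tau_i$ and $v_i^{-1}(\tau_i)$ lie in $M(v_i)$.

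Granting $(\ast)$, the lemma follows by cases on the sign of $\tau_i$. If $\tau_i > 0$, then by $(\ast)$ the first facet of $X(v_i)$ coincides with the set of positive roots of $M(v_i)$ inverted by $v_i^{-1}$, so $\tau_i$ is in the first facet iff $v_i^{-1}(\tau_i) < 0$, which is the claim. If $\tau_i < 0$, then $\tau_i$ is already backward by Lemma~\ref{lem:easyrightleft}(ii), and it remains to show $v_i^{-1}(\tau_i) > 0$. Write $\tau_i = -\rho_k$ with $1 \le k \le n$, permissible because the negative-root vertices of $AX(c)$ are $\{-\rho_1,\dots,-\rho_n\}$; then $\rho_k \in M(v_i)$. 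Since $\rho_1,\dots,\rho_n$ are linearly independent and $\dim M(v_i) = i$, the set $M(v_i) \cap \{\rho_1,\dots,\rho_{k-1}\}$ has at most $i-1$ elements ($i$ elements would span $M(v_i)$ yet omit $\rho_k \in M(v_i)$, contradicting the linear independence of $\rho_1,\dots,\rho_k$). Hence $\rho_k$ is among the first $i$ positive roots of $M(v_i)$, so by $(\ast)$ it lies in the first facet and satisfies $v_i^{-1}(\rho_k) < 0$, giving $v_i^{-1}(\tau_i) = -v_i^{-1}(\rho_k) > 0$.

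The main obstacle is proving $(\ast)$ itself. Its first half, characterising the first facet as the set of the first $i$ positive roots of $M(v_i)$ in the BW order, is a direct generalisation of the argument already present in the proof of Lemma~\ref{lem:easyrightleft}(i) to arbitrary $v_i \in \mbox{NCP}_c$. The second half, identifying these first $i$ positive roots with the inversions of $v_i^{-1}$, is more subtle: it is a compatibility between the BW total order restricted to $M(v_i)$ and the fact that $v_i$ acts as a Coxeter element of the rank-$i$ parabolic $W_{v_i}$ (so the inversion set of $v_i^{-1}$ in $W_{v_i}$ has cardinality equal to the Coxeter length, namely $i$). Since both sets have cardinality $i$, it suffices to prove one inclusion, which I would attempt by induction on $i$ using the recursion $v_i = R(\tau_i) v_{i-1}$ together with the displayed identity above, or by appealing to the explicit total-order computations in \cite{BW}.
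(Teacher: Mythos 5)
Your overall architecture is the same as the paper's: the forward direction reduces to the characterisation of the first facet of $X(v_i)$ as the set of positive roots of $M(v_i)$ inverted by $v_i^{-1}$, and the backward direction splits according to the sign of $\tau_i$, with the negative case handled by observing that $-\tau_i$ is one of the first $n$ roots, lies in $M(v_i)$, and is forced into the first facet by the linear independence of $\rho_1,\dots,\rho_n$. The paper, however, obtains the key equivalence by simply citing Lemma~3.3 of \cite{ABMW}, whereas you fold it into your statement $(\ast)$ and leave it unproved: your closing paragraph only sketches two possible strategies for the ``second half'' of $(\ast)$ without carrying either out. Since that second half \emph{is} the content of the cited lemma, this is a genuine gap in your write-up as a self-contained argument (though it would be acceptable if you, like the authors, invoked \cite{ABMW} directly).

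There is also a mathematical problem with the first half of $(\ast)$. The lexicographically first facet of $X(v_i)$ is obtained greedily: its $m$-th vertex is the smallest positive root of $M(v_i)$ \emph{not in the linear span} of the earlier vertices (this is how Corollary~6.12 of \cite{BW} is used in the proof of Lemma~\ref{lem:magiccommuting}). This coincides with ``the first $i$ positive roots of $M(v_i)$'' only when those first $i$ roots are linearly independent, which fails in general, e.g.\ when $W_{v_i}$ is reducible and all the positive roots of one irreducible component precede those of another in the total order. The one place you use this half --- to conclude that $\rho_k = -\tau_i$ lies in the first facet --- can be repaired: since every positive root of $M(v_i)$ preceding $\rho_k$ lies in $\{\rho_1,\dots,\rho_{k-1}\}$, the root $\rho_k$ cannot lie in the span of its predecessors in $M(v_i)$ (by independence of $\rho_1,\dots,\rho_n$), so the greedy description puts it in the first facet. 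That is exactly the argument the paper makes; you should replace the appeal to the false general statement with it.
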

\begin{proof}   $(\Rightarrow)$  By definition, if $\tau_i$ is a forward vertex of $F$,
then $\tau_i$ is a vertex of the first facet of $X(v_1)$
and by Lemma~3.3 of \cite{ABMW},
$v_i^{-1}(\tau_i)$ is a negative root.
\vskip .2cm
$(\Leftarrow)$  Conversely, assume that $v_i^{-1}(\tau_i)$ is a negative root.  We
deal separately with the two cases where $\tau_i$ is positive or negative.  If
$\tau_i$ is a positive root then $\tau_i$ is a vertex of the first facet of
$X(v_i)$ by Lemma~3.3 of \cite{ABMW}.
\vskip .2cm
On the other hand, if $\tau_i$ is negative then
\[\tau_i \in \{\rho_{nh/2+1}, \dots \rho_{nh/2+n}\} = (-S_1)\cup c(S_2).\]
Hence $-\tau_i$ belongs to $S_1\cup c(-S_2)$ and is one of the first $n$ roots.
Since the first $n$ roots form a linearly
independent set, the vectors in
\[\{\rho_1, \dots , \rho_n\} \cap M(v_i)\]
must all lie in the first facet of $X(v_i)$. In particular, $-\tau_i$ lies in the
first facet of $X(v_i)$.  Thus $v_i^{-1}(-\tau_i)$ is a negative root by Lemma~3.3 of
\cite{ABMW}.  However,
this gives a contradiction since $v_i^{-1}(\tau_i)$ is assumed to be negative.
\end{proof}
The following is an immediate consequence.
\begin{corollary}  The root $\tau_i$ is a backward vertex of $F$
if and only $v_i^{-1}(\tau_i)$ is a positive root.
\label{cor:pushright}
\end{corollary}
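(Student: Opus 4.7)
The plan is to read this off as the negation of the biconditional in Lemma~\ref{lem:pushright}. First I would observe that, by Definition~\ref{def:type}, every vertex $\tau_i$ of the facet $F$ is either forward or backward, and these two properties are mutually exclusive; so "$\tau_i$ is backward" is exactly the negation of "$\tau_i$ is forward."

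Next I would note that $v_i \in W$, hence $v_i^{-1}$ permutes the root system; in particular $v_i^{-1}(\tau_i)$ is a nonzero root, so it is either positive or negative, and the two options are again mutually exclusive. Thus "$v_i^{-1}(\tau_i)$ is positive" is exactly the negation of "$v_i^{-1}(\tau_i)$ is negative."

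Applying Lemma~\ref{lem:pushright}, which asserts the biconditional
\[\tau_i \text{ is forward in } F \iff v_i^{-1}(\tau_i) \text{ is a negative root},\]
and negating both sides using the two dichotomies above, I obtain
\[\tau_i \text{ is backward in } F \iff v_i^{-1}(\tau_i) \text{ is a positive root},\]
which is the claim. There is no real obstacle here: the only thing one must check is that the forward/backward classification and the positive/negative classification are each genuine dichotomies with no excluded middle, and both are immediate from the definitions in Section~\ref{AXmap} and from $\tau_i$ being a root.
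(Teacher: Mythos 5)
Your argument is correct and is exactly what the paper intends: the paper states this corollary as an immediate consequence of Lemma~\ref{lem:pushright}, and your proof simply makes explicit the two dichotomies (forward/backward from Definition~\ref{def:type}, positive/negative for a nonzero root) that justify negating both sides of that biconditional. Nothing further is needed.
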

We now turn to the characterisation of backward vertices of facets in $AX(c)$.  We
begin with an elementary observation.
\begin{lemma}  If $\theta_i$ is the root defined by $\theta_i = c^{-1}(\tau_i)$ then
\[v_i^{-1}(\tau_i) = -[c^{-1}u_i c] \theta_i.\]
\label{lem:gammarelationship}
\end{lemma}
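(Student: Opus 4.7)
The plan is to verify the identity by direct manipulation of the definitions, exploiting the factorisation of $c$ induced by the ordering $\tau_1 < \tau_2 < \dots < \tau_n$ of the vertices of $F$.

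First I would observe the basic factorisation of $c$: since $c = R(\tau_n)R(\tau_{n-1}) \cdots R(\tau_1)$, we can split the product at position $i$ in two natural ways, and in particular
\[ c = [R(\tau_n)\cdots R(\tau_i)] \cdot [R(\tau_{i-1}) \cdots R(\tau_1)] = u_i \, v_{i-1} . \]
Solving for $v_{i-1}^{-1}$ gives the key identity $v_{i-1}^{-1} = c^{-1} u_i$. Second, from $v_i = R(\tau_i) v_{i-1}$ we get $v_i^{-1} = v_{i-1}^{-1} R(\tau_i)$.

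Next I would apply these factorisations to $\tau_i$. Since $R(\tau_i)(\tau_i) = -\tau_i$,
\[ v_i^{-1}(\tau_i) = v_{i-1}^{-1} R(\tau_i)(\tau_i) = -v_{i-1}^{-1}(\tau_i) = -c^{-1} u_i (\tau_i). \]
Finally, using $\tau_i = c(\theta_i)$ (which is just the defining relation $\theta_i = c^{-1}(\tau_i)$), we substitute to obtain
\[ v_i^{-1}(\tau_i) = -c^{-1} u_i c(\theta_i) = -[c^{-1} u_i c]\, \theta_i, \]
which is the desired formula.

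There is no real obstacle here; the lemma is a bookkeeping identity that drops out once one recognises the factorisation $c = u_i v_{i-1}$ (note the shift in index from the pair $(u_i, v_i)$, which is where a careless reader might trip). The content of the lemma is that conjugating $u_i$ by $c^{-1}$ converts the natural action on $\tau_i$ into an action on its $c$-rotate $\theta_i$; this rewriting will presumably be useful in the sequel for comparing forward/backward vertex criteria (via Corollary~\ref{cor:pushright}) with data coming from the rotated root $\theta_i$.
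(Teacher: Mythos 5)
Your proof is correct and is essentially the paper's own argument: the paper writes the same factorisation as $c = u_iR(\tau_i)v_i$, deduces $v_i^{-1} = c^{-1}u_iR(\tau_i)$, and then computes $v_i^{-1}(\tau_i) = -c^{-1}u_i(\tau_i) = -c^{-1}u_ic(\theta_i)$ exactly as you do. Your detour through $v_{i-1}$ is only a cosmetic repackaging of the same splitting of the product.
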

\begin{proof}  Since $c = u_iR(\tau_i)v_i$, we can write
$v_i^{-1} = c^{-1}u_iR(\tau_i)$. Thus
\[v_i^{-1}[\tau_i] = c^{-1}u_iR(\tau_i)[\tau_i] = -c^{-1}u_i[\tau_i] =
-c^{-1}u_ic [\theta_i].\]
\end{proof}
\begin{lemma} The root $\tau_i$ is a backward vertex of $F$
if and only if $\tau_i = c(\theta_i)$
for some vertex $\theta_i$ in the last facet of $X(c^{-1}u_i c)$.
\label{lem:leftchar}
\end{lemma}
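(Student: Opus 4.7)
The strategy is to translate the backward-vertex condition $v_i^{-1}(\tau_i) > 0$ from Corollary~\ref{cor:pushright} into a condition on $\theta_i = c^{-1}(\tau_i)$ via Lemma~\ref{lem:gammarelationship}, and then to recognise that condition as $\theta_i$ being a vertex of the last facet of $X(c^{-1} u_i c)$.

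First I would combine the two results already in hand. By Corollary~\ref{cor:pushright}, $\tau_i$ is backward in $F$ iff $v_i^{-1}(\tau_i)$ is positive. By Lemma~\ref{lem:gammarelationship}, $v_i^{-1}(\tau_i) = -(c^{-1} u_i c)\theta_i$ where $\theta_i = c^{-1}(\tau_i)$. Hence $\tau_i$ is backward if and only if $(c^{-1} u_i c)\theta_i$ is a negative root. Note that the relation $\tau_i = c(\theta_i)$ holds automatically, which gives exactly the form required in the statement.

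The remaining step is to identify ``$(c^{-1} u_i c)\theta_i$ is negative'' with ``$\theta_i$ is a vertex of the last facet of $X(c^{-1} u_i c)$.'' This would follow from a last-facet companion to Lemma~3.3 of \cite{ABMW}: for any $w \in \mbox{NCP}_c$ and positive root $\theta \in M(w)$, the root $\theta$ is a vertex of the last facet of $X(w)$ if and only if $w(\theta)$ is a negative root. This is the natural mirror of the first-facet characterisation (which uses $w^{-1}(\tau)$ negative), and can be established by exploiting the involutive symmetry between first and last facets in the ordering of $X(w)$ --- concretely, by identifying the last facet of $X(w)$ with a suitable copy of the first facet of $X(w^{-1})$ at the level of reduced reflection factorisations, and then invoking the known first-facet statement applied to $w^{-1}$.

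The hard part will be the last-facet characterisation: matching the ordering conventions (what ``last'' means lexicographically for facets of $X(w)$) and verifying that the reflection-reversal involution on factorisations $w = R(\sigma_k)\cdots R(\sigma_1)$ really does exchange first and last facets with respect to the order on roots requires careful bookkeeping. Once that is in place, the proof concludes immediately: $\tau_i$ is backward iff $(c^{-1} u_i c)\theta_i$ is negative iff $\theta_i$ lies in the last facet of $X(c^{-1} u_i c)$, and $\tau_i = c(\theta_i)$ throughout.
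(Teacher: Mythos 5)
Your reduction is exactly the paper's: combine Corollary~\ref{cor:pushright} ($\tau_i$ backward iff $v_i^{-1}(\tau_i)$ positive) with Lemma~\ref{lem:gammarelationship} to turn the condition into ``$(c^{-1}u_ic)\theta_i$ is negative,'' and then invoke a last-facet criterion. The ``last-facet companion'' you propose to establish is, word for word, Corollary~3.15 of \cite{ABMW}, which the paper simply cites; so the part you flag as ``the hard part'' requires no new work and your involution-of-factorisations argument for it, while plausible, is unnecessary.

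The one place where your write-up has a genuine (if small) hole is the remark that ``the relation $\tau_i = c(\theta_i)$ holds automatically.'' In the forward direction you must know that $\theta_i = c^{-1}(\tau_i)$ is a \emph{positive} root before the last-facet criterion (whose hypothesis, as you yourself state it, is ``positive root $\theta \in M(w)$'') can be applied --- otherwise ``$(c^{-1}u_ic)\theta_i$ negative'' does not imply membership in the last facet, since all vertices of $X(c^{-1}u_ic)$ are positive roots. The paper supplies this: if $\tau_i$ is backward then by Lemma~\ref{lem:easyrightleft}(i) it is not among $\rho_1,\dots,\rho_n$, so $\tau_i = \rho_j$ with $j > n$, and $c^{-1}(\tau_i) = \rho_{j-n}$ with $1 \le j-n \le nh/2$ is positive. (In the reverse direction positivity is free, since $\theta_i$ is assumed to be a vertex of the last facet.) Add that observation and your proof is complete and coincides with the paper's.
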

\begin{proof} $(\Leftarrow)$  Suppose that $\tau_i = c(\theta_i)$
for some vertex $\theta_i$ in the last facet of $X(c^{-1}u_i c)$.  Then
Lemma~\ref{lem:gammarelationship} gives
$v_i^{-1}(\tau_i) = -[c^{-1}u_i c] \theta_i$.
However the fact that  $\theta_i$ is in the last facet of $X(c ^{-1}u_i c)$ means that
$c^{-1}u_ic(\theta_i)$ is negative by Corollary~3.15 of \cite{ABMW}.
Thus $v_i^{-1}(\tau_i)$ is positive and $\tau_i$ is
a backward vertex of $F$ by Corollary~\ref{cor:pushright}.
\vskip .2cm
 $(\Rightarrow)$  Conversely, suppose that $\tau_i$ is a backward vertex of $F$.
 Then, by part (i) of Lemma~\ref{lem:easyrightleft}, $\tau_i$ is not one of the
 first $n$ roots.  However, since $c(\rho_i) = \rho_{i+n}$, this means that
 $c^{-1}\tau_i$ is a positive root.  Let $\theta_i$ be this positive root.
By Corollary 3.15 of \cite{ABMW}, it  remains to show that
$c^{-1} u_i c(\theta_i)$ is a negative root.  However,
by Lemma~\ref{lem:gammarelationship},
$[c^{-1}u_i c ]\theta_i = -v_i^{-1}(\tau_i)$
and this root is negative, by Corollary~\ref{cor:pushright}, since
we are assuming $\tau_i$ is backward.
Thus $\theta_i$ is a vertex of the last facet of $X(c^{-1}u_i c)$.
\end{proof}
As in Lemma~5.3 of \cite{ABMW} forward and backward vertices of a facet $F$ of $AX(c)$
are orthogonal if they appear in the wrong order in the factorisation of $c$
determined by $F$.  The induction proof of Lemma~5.3 of \cite{ABMW} could be adapted
here but it is possible to give a more conceptual proof.
\begin{lemma} If $\tau_i$ is a backward vertex of $F$, $\tau_j$ is a forward
vertex of $F$ and $\tau_i < \tau_j$ then $\tau_i \cdot \tau_j = 0$.
\label{lem:magiccommuting}
\end{lemma}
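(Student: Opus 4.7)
The plan is to transfer the orthogonality question to the complex $X(K)$ with $K = c^{-1}u_ic$, using the change of variables $\theta_k := c^{-1}(\tau_k)$.

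Since $c$ is orthogonal, $\tau_i \cdot \tau_j = \theta_i \cdot \theta_j$, so it suffices to prove $\theta_i \perp \theta_j$. By Lemma~\ref{lem:leftchar}, $\theta_i$ is a positive root in the last facet of $X(K)$, so in particular $\theta_i \in M(K)$ and $K(\theta_i) < 0$. Since $R(\tau_j) \preceq u_i$ (as the reduced factorization $u_i = R(\tau_n)\cdots R(\tau_i)$ contains $R(\tau_j)$), conjugation by $c$ shows $R(\theta_j) \preceq K$, so $\theta_j \in M(K)$ as well.

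Next I would set $K' := c^{-1}u_jc$ and $P' := c^{-1}(R(\tau_{j-1})\cdots R(\tau_i))c = R(\theta_{j-1})\cdots R(\theta_i)$, so that $K = K' P'$. Reflection lengths satisfy $\ell(K') + \ell(P') = (n-j+1) + (j-i) = n-i+1 = \ell(K)$, so this is a reduced factorization within the parabolic $W_K$. Moreover $\theta_j \in M(K')$ (from $R(\tau_j) \preceq u_j$) and $\theta_i \in M(P')$ (since $\theta_i$ is the root of the rightmost reflection factor of $P'$). A direct computation using Lemma~\ref{lem:gammarelationship} applied at index $j$ rewrites the forward condition $v_j^{-1}(\tau_j) < 0$ as $K'(\theta_j) > 0$.

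The orthogonality $\theta_i \perp \theta_j$ would follow immediately from an orthogonal direct-sum decomposition $M(K) = M(K') \perp M(P')$, i.e., from the Coxeter diagram of $W_K$ splitting along the factorization $K = K'P'$. The main obstacle is to establish this splitting: reduced factorizations of Coxeter elements do not in general yield orthogonal move-space decompositions, so the forward condition $K'(\theta_j) > 0$ must be combined with the last-facet condition $K(\theta_i) < 0$ to force the splitting here. Making this precise is the heart of the conceptual argument and should reveal that the backward--forward configuration in the ``wrong order'' is exactly what produces the orthogonal diagram decomposition of $W_K$.
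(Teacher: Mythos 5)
Your reduction to the conjugated picture is sound as far as it goes: the identities $\tau_i\cdot\tau_j=\theta_i\cdot\theta_j$, the translation of the forward condition into $[c^{-1}u_jc](\theta_j)>0$ via Lemma~\ref{lem:gammarelationship}, and of the backward condition into $[c^{-1}u_ic](\theta_i)<0$ are all correct. But the proposal stops exactly where the lemma begins: you assert that the orthogonality ``would follow immediately from an orthogonal direct-sum decomposition $M(K)=M(K')\perp M(P')$'' and then concede that establishing this splitting is the main obstacle. That splitting is not only unproven, it is strictly stronger than the lemma and false in general: $M(K')$ is spanned by $\theta_j,\dots,\theta_n$ and $M(P')$ by $\theta_i,\dots,\theta_{j-1}$, so $M(K')\perp M(P')$ would force $\tau_l\cdot\tau_k=0$ for \emph{every} pair with $i\le l<j\le k\le n$, irrespective of whether $\tau_l$ is backward or $\tau_k$ is forward; a pair of consecutive non-orthogonal forward vertices straddling the cut already violates this. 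So the route via a diagram splitting of $W_K$ cannot work as stated, and no mechanism is offered for extracting the single orthogonality $\theta_i\perp\theta_j$ from the sign conditions you have collected.

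The paper's argument is genuinely different and avoids any global splitting. It works with $v_j=R(\tau_j)\cdots R(\tau_1)$ and the ordered vertex set $\{\epsilon_1,\dots,\epsilon_j\}$ of the \emph{first} facet of $X(v_j)$: since $\tau_j$ is forward and $\{\tau_1,\dots,\tau_j\}$ is linearly independent, $\tau_j=\epsilon_j$, and Lemma~3.4 of \cite{ABMW} then gives $\tau_j\cdot\tau_i=0$ provided $\tau_i\notin\{\epsilon_1,\dots,\epsilon_{j-1}\}$. The remaining work, which is the real content, is to rule out $\tau_i=\epsilon_k$: because $\tau_i$ is backward it is not a vertex of the first facet of $X(v_i)$, hence lies in the span of the roots of $M(v_i)\subseteq M(v_j)$ that precede it, whereas by Corollary~6.12 of \cite{BW} the span of the roots of $M(v_j)$ preceding $\epsilon_k$ is exactly the span of $\{\epsilon_1,\dots,\epsilon_{k-1}\}$, which does not contain $\epsilon_k$. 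You would need to import analogues of these two facts to complete your plan; as written, the proposal is an unproved reformulation of the statement rather than a proof.
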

\begin{proof}  Let $\{ \epsilon_1,
\dots , \epsilon_j\}$ be the ordered vertex set of the first facet of $X(v_j)$,
where
$v_j$ is the
noncrossing partition
\[v_j = R(\tau_j)\dots R(\tau_1) = R(\epsilon_j)\dots R(\epsilon_1).\]
Since $\tau_j$ is forward, $\tau_j$ must, by definition, be one of the $\epsilon$'s.
Moreover, since the set $\{\tau_1, \tau_2, \dots , \tau_j\}$ is linearly independent
we must have $\tau_j = \epsilon_j$. If $\tau_i \not\in
\{ \epsilon_1, \dots , \epsilon_{j-1}\}$ then Lemma 3.4 of \cite{ABMW} gives
$\tau_j \cdot \tau_i = 0$. Thus it remains to show that $\tau_i$ is not
one of
the $\epsilon$'s.
\vskip .2cm
In order to show this, let $\{\epsilon'_1, \dots , \epsilon'_i\}$ be the
ordered vertex set of the
first facet of $X(v_i)$, where
$v_i$ is the
noncrossing partition
\[v_i = R(\tau_i)\dots R(\tau_1) = R(\epsilon'_i)\dots
R(\epsilon'_1).\]
Since $\tau_i$ is backward, $\tau_i \not\in \{\epsilon'_1, \dots , \epsilon'_i\}$
by definition.
However, since $\{\epsilon'_1, \dots , \epsilon'_i\}$ is a basis for $M(v_i)$
and
$\tau_i > \epsilon'_i$, it follows that the root $\tau_i$ lies in
the linear span of the set
\[\{\rho \in M(v_i) \cap \{\rho_1, \dots \rho_{nh/2}\} \mid \rho < \tau_i\}.\]
Since $v_i \preceq v_j$,
we deduce that $\tau_i$ lies in the linear span of
\[\{\rho \in M(v_j) \cap \{\rho_1, \dots \rho_{nh/2}\} \mid \rho < \tau_i\}.\]
In particular, $\tau_i \not\in \{ \epsilon_1,
\dots , \epsilon_{j-1}\}$ since the linear span of
\[\{\rho \in M(v_j)\cap \{\rho_1, \dots \rho_{nh/2}\} \mid \rho < \epsilon_k\}\]
has basis $\{\epsilon_1, \dots \epsilon_{k-1}\}$ and
is $(k-1)$-dimensional for $1 \le k \le j-1$ by Corollary 6.12 of \cite{BW}.
\end{proof}
\begin{theorem} The function $\phi$ from facets of $AX(c)$ to $\mbox{NCP}_c$ taking
a facet $F$ with forward vertices $\tau_{i_1} < \tau_{i_2} < \dots < \tau_{i_k}$ to the
product $R(\tau_{i_k})R(\tau_{i_{k-1}})\dots R(\tau_{i_1})$ is a bijection.
\label{thm:AXbij}
\end{theorem}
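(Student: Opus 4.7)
My plan is to prove Theorem~\ref{thm:AXbij} in three stages: verifying $\phi$ is well-defined, constructing an explicit preimage for each $w\in\mbox{NCP}_c$, and closing with a counting argument.

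First, I would show $\phi(F)\in\mbox{NCP}_c$. Writing $c = R(\tau_n)\cdots R(\tau_1)$ from the facet $F$, partition the indices into the set $B$ of backward vertices and the set $A$ of forward vertices. By Lemma~\ref{lem:magiccommuting}, $R(\tau_i)$ and $R(\tau_j)$ commute whenever $i < j$ with $i \in B$ and $j \in A$. Iterating these commutations pushes each backward reflection leftward past every forward reflection that initially sits to its left in the decreasing product, yielding
\[c = b(F) \cdot \phi(F),\]
where $b(F)$ is the decreasing product of the reflections indexed by $B$. Since the resulting factorisation has total reflection length $n = \ell(c)$, it is reduced, so $\phi(F) \preceq c$.

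Next, for each $w \in \mbox{NCP}_c$, I would construct a facet $F_w$ of $AX(c)$ with $\phi(F_w) = w$. Let $w' = cw^{-1}$. Take $\{\epsilon_1 < \cdots < \epsilon_k\}$ to be the ordered vertex set of the first facet of $X(w)$, so that $R(\epsilon_k)\cdots R(\epsilon_1) = w$. Let $\{\theta_1,\ldots,\theta_{n-k}\}$ be the ordered vertex set of the last facet of $X(c^{-1}w'c)$, and set $\sigma_l = c(\theta_l)$. The identity $c(\rho_i) = \rho_{i+n}$ places each $\sigma_l$ in the range $\{\rho_{n+1},\ldots,\rho_{nh/2+n}\}$, hence among the vertices of $AX(c)$, and conjugation by $c$ gives $R(\sigma_{n-k})\cdots R(\sigma_1) = w'$. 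I would show the order-merged sequence of the $\epsilon$'s and the $\sigma$'s is the ordered vertex set of a facet $F_w$ whose forward vertices are precisely the $\epsilon$'s, so $\phi(F_w) = w$ by inspection.

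The main obstacle is verifying that this merge really is a facet, i.e., that the decreasing product of reflections along the merged ordering equals $c$. Since the decreasing products over just the $\epsilon$'s and just the $\sigma$'s return $w$ and $w'$ respectively, and $c = w'w$, it suffices to show that $R(\sigma_l)$ and $R(\epsilon_m)$ commute whenever $\sigma_l < \epsilon_m$, so that all $\sigma$ reflections may be gathered to the left of all $\epsilon$ reflections. I would establish this orthogonality directly by applying Corollary~6.12 of \cite{BW} to control the linear spans of initial root segments in both $X(w)$ and $X(c^{-1}w'c)$, closely paralleling the span-dimension argument at the end of Lemma~\ref{lem:magiccommuting}. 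Once $\phi$ is shown to be surjective, equinumerosity of the facet set of $AX(c)$ with $\mbox{NCP}_c$ (see \cite{ABMW}) forces $\phi$ to be a bijection.
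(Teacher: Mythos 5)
Your overall architecture (well-definedness, then surjectivity, then counting) is legitimate, and your first and last steps are fine: Lemma~\ref{lem:magiccommuting} does let you commute the backward reflections leftward to obtain $c = b(F)\,\phi(F)$ as a reduced factorisation, so $\phi(F)\preceq c$; and surjectivity plus the equinumerosity from \cite{ABMW} would indeed give bijectivity. The gap is in the middle step, and you have correctly located it yourself: you must show that the merged sequence of $\epsilon$'s and $\sigma$'s really is a facet, and you reduce this to the orthogonality $\sigma_l\cdot\epsilon_m=0$ whenever $\sigma_l<\epsilon_m$. But the proof of Lemma~\ref{lem:magiccommuting} does not ``closely parallel'' over to this situation. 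That proof applies Lemma~3.4 of \cite{ABMW} to two vertices $\tau_i<\tau_j$ of a facet that is already known to exist: the essential input is that $\tau_i$ lies in $M(v_j)$, where $v_j=R(\tau_j)\cdots R(\tau_1)$ is a partial product of that known facet, and Corollary~6.12 of \cite{BW} is then invoked inside the single subspace $M(v_j)$. In your construction the $\epsilon$'s lie in $M(w)$ while the $\sigma$'s lie in $c(M(w^{-1}c))=M(cw^{-1})$, and these subspaces meet only in $0$ (they are complementary, not nested, and not orthogonal in general), so $\sigma_l\notin M(R(\epsilon_m)\cdots R(\epsilon_1))$ and neither Lemma~3.4 of \cite{ABMW} nor the span-dimension count applies. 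You would be proving the orthogonality before having any noncrossing partition whose moved space contains both roots, which is exactly the chicken-and-egg problem the paper avoids. A secondary gap: even granting that the merge is a facet, identifying its forward vertices as precisely the $\epsilon$'s is not ``by inspection''; it requires running Lemma~\ref{lem:pushright} and Lemma~\ref{lem:leftchar} on the partial products of the merged ordering.

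The paper goes the other way round: it proves injectivity rather than surjectivity. Given a facet $F$ with $\phi(F)=v$, Lemma~\ref{lem:pushright} shows the forward vertices are exactly the vertices of the first facet of $X(v)$, and Lemma~\ref{lem:leftchar} (applied using $c^{-1}(cv^{-1})c=v^{-1}c$) shows the backward vertices are exactly the $c$-images of the vertices of the last facet of $X(v^{-1}c)$; hence $F$ is completely determined by $v$. Injectivity plus the same count then gives bijectivity, and the existence statement you are trying to establish directly appears afterwards as Corollary~\ref{c:AXbij}, a consequence of the theorem rather than an ingredient of its proof. The cleanest repair of your argument is to replace the surjectivity step by this determination argument, which uses only lemmas already available.
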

\begin{proof} Suppose $F$ is a
facet with $\phi(F) = v$.  By Lemma~\ref{lem:magiccommuting} appropriate
pairs of  factors of
\[c = R(\tau_n)R(\tau_{n-1})\dots R(\tau_1)\]
can be commuted until the product $R(\tau_{i_k})R(\tau_{i_{k-1}})\dots R(\tau_{i_1})$
appears on the right.  By Lemma~\ref{lem:pushright}
the forward vertices of $F$ are precisely the vertices of the first facet of $X(v)$.
Since $c = (cv^{-1})v$ and $c^{-1}(cv^{-1})c = v^{-1}c$,
Lemma~\ref{lem:leftchar} implies that the backward vertices
of $F$ are the images under $c$ of the vertices of the last facet of $X(v^{-1}c)$.
Thus the vertex set of $F$ is completely determined by $v$ and hence
$\phi$ is injective.  On the
other hand, we know from Theorem~6.4 of \cite{ABMW} that the number of facets of
$EX(c)$ is the same as the number of elements of $\mbox{NCP}_c$.  Since $AX(c)$
is the image of $EX(c)$ under the isometry $c_+$ it follows that $\phi$ is a
bijection.
\end{proof}
The following result is immediate from Theorem~\ref{thm:AXbij} and its proof.
\begin{corollary}  For each $v \in \mbox{NCP}_c$ there is a facet of $AX(c)$
whose vertex set consists of the vertices of the first facet of $X(v)$
and the images under $c$ of the vertices of the last facet of $X(v^{-1}c)$.
Moreover, every facet of $AX(c)$ arises in this way.
\label{c:AXbij}
\end{corollary}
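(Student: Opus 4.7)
The plan is to invert the bijection $\phi$ from Theorem~\ref{thm:AXbij} and read off the vertex structure of $F := \phi^{-1}(v)$ from what has already been proved. Given $v \in \mbox{NCP}_c$, let $F = \phi^{-1}(v)$, which exists uniquely since $\phi$ is a bijection, and let $\tau_{i_1} < \cdots < \tau_{i_k}$ be the forward vertices of $F$. By definition of $\phi$ we have $R(\tau_{i_k})\cdots R(\tau_{i_1}) = v$, and the first half of the proof of Theorem~\ref{thm:AXbij} (via Lemma~\ref{lem:pushright} together with Lemma 3.3 of [ABMW]) already identifies this set with the vertex set of the first facet of $X(v)$.

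For the backward vertices I would use the same commutation and conjugation identities that appear in the theorem's proof. Applying Lemma~\ref{lem:magiccommuting} rewrites $c = uv$ where $u = cv^{-1}$ is the product of the backward reflections taken in decreasing order; the conjugation identity $c^{-1}uc = v^{-1}c$ then converts the per-vertex statement of Lemma~\ref{lem:leftchar} (each backward $\tau_i$ is $c(\theta_i)$ for a $\theta_i$ in the last facet of $X(c^{-1}u_ic)$) into the desired global statement, namely that the collection $\{\theta_i : \tau_i \text{ backward}\}$ is the full vertex set of the last facet of $X(v^{-1}c)$. A dimension check ($n-k = \ell(v^{-1}c)$ with the $\theta_i$'s independent) confirms we have a full facet; order-preservation of $c^{-1}$ (which shifts root indices by $-n$) plus Corollary 3.15 of [ABMW] then identifies it as the \emph{last} facet, because one verifies that $v^{-1}c(\theta_i)$ is negative for each such $\theta_i$.

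The second clause, that every facet arises this way, is immediate from $\phi$ being a bijection: any facet $F$ equals $\phi^{-1}(\phi(F))$, and so has the described decomposition with $v = \phi(F)$.

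The one mild technical point I would anticipate is the alignment between the per-vertex partial products $u_i$ appearing in Lemma~\ref{lem:leftchar} and the single element $u = cv^{-1}$ that is natural for the corollary. This should reduce to careful bookkeeping using the commutations of Lemma~\ref{lem:magiccommuting}, so that after commuting all forward reflections to the right the relevant $u_i$ telescope into $u$; no new ideas appear to be required beyond those already developed in this section.
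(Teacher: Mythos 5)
Your argument is essentially identical to the paper's: the paper derives this corollary directly from the proof of Theorem~\ref{thm:AXbij}, where it is already established (via Lemma~\ref{lem:pushright}, Lemma~\ref{lem:magiccommuting}, and Lemma~\ref{lem:leftchar} with the conjugation identity $c^{-1}(cv^{-1})c = v^{-1}c$) that the forward vertices of $\phi^{-1}(v)$ are the first facet of $X(v)$ and the backward vertices are the $c$-images of the last facet of $X(v^{-1}c)$, with surjectivity of the construction following from bijectivity of $\phi$. Your flagged ``technical point'' about aligning the partial products $u_i$ with $u = cv^{-1}$ is handled in the paper exactly as you anticipate, by the commutations of Lemma~\ref{lem:magiccommuting}.
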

\section{Applying the $\mu$ operator.}
\label{mu}
\begin{definition}
We define the simplicial  complex $\mu(AX(c))$ to be the result of applying
the operator $\mu = 2(I-c)^{-1}$ to $AX(c)$.
\label{d:MAX}
\end{definition}
The vertices and simplices of $\mu(AX(c))$ have a simple characterisation which
follows immediately from Proposition~\ref{p:AX}.
\begin{proposition}
The simplicial complex $\mu(AX(c))$ has vertex set
\[\{\mu(\rho_1), \dots, \mu(\rho_{nh/2+n})\},\]
and a simplex on $\{\mu(\tau_1), \dots, \mu(\tau_k)\}$ provided
\[\rho_1 \le \tau_1 < \tau_2 < \dots <\tau_k \le \rho_{nh/2+n}\ \
\mbox{and} \ \ \l[R(\tau_1)\dots R(\tau_k)c] = n-k.\]
\label{p:MAX}
\end{proposition}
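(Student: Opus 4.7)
The plan is to observe that $\mu = 2(I-c)^{-1}$ is a linear isomorphism of $\mathbb{R}^n$, so it carries the simplicial complex $AX(c)$ to a combinatorially isomorphic simplicial complex, and then to invoke Proposition~\ref{p:AX} directly. Since the statement is flagged as an immediate consequence, the work is really just the transport of structure along a bijection.

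First I would verify that $\mu$ is well-defined and invertible, which amounts to checking that $c$ does not have $1$ as an eigenvalue. This holds because $c$ is a Coxeter element of an irreducible finite real reflection group, whose eigenvalues are $e^{2\pi i m_j/h}$ for exponents $m_j \in \{1, \ldots, h-1\}$, none of which are $0$ modulo $h$. The preliminary section already uses this invertibility freely (for example in the recollection that $\mu(\rho_i)$ is characterized by $\mu(\rho_i)\cdot \alpha_i = 1$), so this step is essentially a sanity check.

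Next, since $\mu$ is a linear isomorphism, applying it to $AX(c)$ yields a simplicial complex whose vertex set is the $\mu$-image of the vertex set of $AX(c)$ and whose simplices are the $\mu$-images of the simplices of $AX(c)$. Proposition~\ref{p:AX} then translates directly: the vertex set of $\mu(AX(c))$ is $\{\mu(\rho_1), \ldots, \mu(\rho_{nh/2+n})\}$, and $\{\mu(\tau_1), \ldots, \mu(\tau_k)\}$ spans a simplex of $\mu(AX(c))$ if and only if the underlying set $\{\tau_1, \ldots, \tau_k\}$ spans a simplex of $AX(c)$, which in turn is equivalent to the two conditions $\rho_1 \le \tau_1 < \tau_2 < \cdots < \tau_k \le \rho_{nh/2+n}$ and $\ell[R(\tau_1) \cdots R(\tau_k) c] = n-k$.

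There is essentially no obstacle here. The substantive combinatorial analysis was already carried out in Proposition~\ref{p:AX}; the only contribution of this proposition is the relabeling of vertices from $\rho_i$ to $\mu(\rho_i)$, together with the preservation of the simplex criterion under a linear isomorphism. Indeed, the only thing that could conceivably go wrong is a failure of $\mu$ to be injective on the finite root vertex set, and this is ruled out by its invertibility on all of $\mathbb{R}^n$.
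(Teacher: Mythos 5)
Your proposal is correct and matches the paper's treatment: the paper gives no separate proof, stating only that the characterisation ``follows immediately from Proposition~\ref{p:AX}'' via the linear isomorphism $\mu = 2(I-c)^{-1}$, which is exactly the transport-of-structure argument you spell out. Your additional check that $\mu$ is invertible (since $1$ is not an eigenvalue of a Coxeter element) is a reasonable, if implicit in the paper, sanity check.
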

Now we are in a position to show that the cones on the facets of $\mu(AX(c))$ are
precisely
the cones $F(w)$ defined in the introduction.  Recall that
\[F(w) = \{x \in {\bf R}^n \mid x \cdot \delta_i \le 0 \ \ \mbox{ and }
x \cdot \theta_j \ge 0 \}, \]
where $\{\delta_1, \dots , \delta_k\}$ is the simple system for the parabolic subgroup
determined by $w$ and $\{\theta_1, \dots , \theta_{n-k}\}$ is the simple system for the
parabolic subgroup determined by $w' = cw^{-1}$.  We note that $F(w)$
is a simplicial cone of dimension $n$ since
\[c =  w'w = R(\theta_1) \dots R(\theta_{n-k})R(\delta_1)\dots R(\delta_k)\]
means that $ \{\delta_1, \dots , \delta_k, \theta_1,  \dots ,\theta_{n-k}\}$ is a
linearly independent set.  We first determine the rays of each $F(w)$.
\begin{proposition}
Suppose $w \in \mbox{NCP}_c$ and $F(w)$ is the simplicial cone defined
above.   Then the rays of $F(w)$ are generated by
\[\{\mu(\epsilon_1), \dots , \mu(\epsilon_{n-k}), \mu[c(\eta_{n-k+1})], \dots ,
\mu[c(\eta_n)]\},\]
where $\{\eta_{n-k+1}, \dots ,
\eta_n \}$ is the vertex set of the last facet of $X(w)$ and
$\{\epsilon_1, \dots , \epsilon_{n-k}\}$ is the vertex set of the first facet of
$X(cw^{-1})$.
\label{p:vertices}
\end{proposition}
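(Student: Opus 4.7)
The plan is to realize $F(w)$ as the $\mu$-image of the cone on a specific facet of $AX(c)$, namely the facet $F$ with $\phi(F) = cw^{-1}$. By Corollary~\ref{c:AXbij} this facet has vertex set
\[ \{\epsilon_1, \dots, \epsilon_{n-k}\} \cup \{c(\eta_{n-k+1}), \dots, c(\eta_n)\}, \]
since its forward vertices constitute the first facet of $X(cw^{-1})$ while its backward vertices are the $c$-image of the last facet of $X((cw^{-1})^{-1}c) = X(w)$. The vertex set of any facet of $AX(c)$ is a basis of $\mathbb{R}^n$ and $\mu$ is a linear isomorphism, so the $n$ vectors $\mu(\epsilon_i)$ and $\mu(c(\eta_j))$ are linearly independent and generate an $n$-dimensional simplicial cone. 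What remains is to identify this cone with $F(w)$.

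The key intermediate step is to prove the two orthogonalities $\mu(\epsilon_i) \in \mbox{Fix}(w)$ and $\mu(c(\eta_j)) \in \mbox{Fix}(cw^{-1})$, which are equivalent to $\mu(\epsilon_i) \cdot \delta_l = 0$ and $\mu(c(\eta_j)) \cdot \theta_l = 0$ for all $l$. For the first, recall from the preliminaries that $\mu(\epsilon_i)$ is the unique vector in the fixed subspace of the length $n-1$ element $R(\epsilon_i)c$ normalized by $\mu(\epsilon_i) \cdot \epsilon_i = 1$. Since $\epsilon_i$ is a vertex of the first facet of $X(cw^{-1})$ we have $R(\epsilon_i) \preceq cw^{-1}$, so the factorization
\[ R(\epsilon_i)\, c \;=\; [R(\epsilon_i)(cw^{-1})]\cdot w \]
is length-additive of total length $(n-k-1)+k = n-1$. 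The standard fact that fixed subspaces intersect along length-additive factorizations then gives $\mbox{Fix}(R(\epsilon_i)c) = \mbox{Fix}(R(\epsilon_i)(cw^{-1})) \cap \mbox{Fix}(w) \subseteq \mbox{Fix}(w)$. The second orthogonality is parallel: conjugation produces $R(c(\eta_j))\,c = c\,R(\eta_j)$, and since $\eta_j$ lies in the last facet of $X(w)$ we have $R(\eta_j) \preceq w$, yielding the length-additive factorization $c\,R(\eta_j) = (cw^{-1})(w\,R(\eta_j))$ and hence $\mbox{Fix}(c\,R(\eta_j)) \subseteq \mbox{Fix}(cw^{-1})$.

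These orthogonalities force the $n-k$ vectors $\mu(\epsilon_i)$ to form a basis of the $(n-k)$-dimensional space $\mbox{Fix}(w)$ and the $k$ vectors $\mu(c(\eta_j))$ to form a basis of $\mbox{Fix}(cw^{-1})$; each putative ray therefore lies in the intersection of $n-1$ of the bounding hyperplanes of $F(w)$. The main obstacle is the remaining sign check: verifying $\mu(\epsilon_i) \cdot \theta_l \ge 0$ and $\mu(c(\eta_j)) \cdot \delta_l \le 0$, which distinguishes the simplicial cone generated by the $\mu$-images from $-F(w)$ and from other simplicial cones sharing its bounding hyperplanes. The strategy is to show, using the normalization $\mu(\tau)\cdot\tau = 1$ together with the orthogonalities above, that inside $\mbox{Fix}(w)$ the basis $\{\mu(\epsilon_i)\}$ is dual to the orthogonal projections $\{\pi(\theta_l)\}$ (noting that $\pi \colon M(cw^{-1}) \to \mbox{Fix}(w)$ is a linear isomorphism because $M(cw^{-1}) \cap M(w) = 0$), and symmetrically for the backward rays in $\mbox{Fix}(cw^{-1})$. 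This duality reduces in the base case $w = e$ to the preliminary fact that $\{\mu(\rho_1), \dots, \mu(\rho_n)\}$ is dual to $\{\alpha_1, \dots, \alpha_n\}$; the general case should follow by a projection argument exploiting the decomposition $c = (cw^{-1})w$ and the characterization of $\mu(\tau)$ in terms of $\mbox{Fix}(R(\tau)c)$.
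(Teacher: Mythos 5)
Your overall architecture is sound and not far from the paper's in spirit: the paper also identifies the generators by showing that the $\mu$-images form (up to sign) the dual basis to $\{\theta_1,\dots,\theta_{n-k},\delta_1,\dots,\delta_k\}$. Your opening moves are fine (linear independence via Corollary~\ref{c:AXbij}, and the containments $\mu(\epsilon_i)\in\mathrm{Fix}(w)$ and $\mu(c(\eta_j))\in\mathrm{Fix}(cw^{-1})$ via length-additive factorisations). But there is a genuine gap at the step you call ``the remaining sign check,'' and the gap is larger than a sign check. Membership in $\mathrm{Fix}(w)$ places $\mu(\epsilon_i)$ on only the $k$ hyperplanes $\delta_l^{\perp}$; to be a ray of the simplicial cone $F(w)$ it must also lie on the $n-k-1$ hyperplanes $\theta_l^{\perp}$ for $l\neq i$, and these \emph{equalities} $\mu(\epsilon_i)\cdot\theta_l=0$ do not follow from your two-factor decomposition (your ``therefore'' after the basis statement is a non sequitur: being a basis vector of $\mathrm{Fix}(w)$ does not put a vector on any $\theta_l^{\perp}$). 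The duality $\mu(\epsilon_i)\cdot\theta_l=\delta_{il}$ that you posit is exactly the right statement, but it is the entire content of the proposition and you leave it at ``should follow by a projection argument.''

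The paper closes precisely this gap by refining the idea you already have: instead of the coarse factorisation $R(\epsilon_i)c=[R(\epsilon_i)(cw^{-1})]\cdot w$, it uses the full reflection factorisation $R(\epsilon_i)c=R(\tau_1)\cdots R(\tau_{i-1})R(\tau_{i+1})\cdots R(\tau_n)$ (where the $\tau$'s are the $\theta$'s followed by the $\delta$'s), so that $M(R(\epsilon_i)c)$ is spanned by all $\tau_j$ with $j\neq i$ and Lemma~2.2 of \cite{ABMW} gives all $n-1$ orthogonalities at once. The positivity also needs real work: the normalisation you quote gives $\mu(\epsilon_i)\cdot\epsilon_i=1$, not $\mu(\epsilon_i)\cdot\theta_i>0$, and since $\epsilon_i=R(\tau_1)\cdots R(\tau_{i-1})\tau_i\neq\tau_i$ in general, one must compute $\tau_i\cdot\mu(\epsilon_i)=\tau_i\cdot\mu(-a_i\tau_i)=-\tau_i\cdot(\mu(\tau_i)-2\tau_i)=1$ using the equivariance properties of $\mu$ (Lemmas~2.3 and~2.4 of \cite{ABMW}). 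Finally, note that the paper does not need Corollary~\ref{c:AXbij} here at all; it identifies the $\epsilon$'s and $\eta$'s with the first facet of $X(cw^{-1})$ and the last facet of $X(w)$ directly via Propositions~3.6 and~3.14 of \cite{ABMW}, which keeps this proposition independent of the bijection $\phi$. If you supply the full-factorisation orthogonality argument and the inner-product computation, your proof becomes essentially the paper's.
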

\begin{proof}
Suppose $\{\tau_1, \dots, \tau_n\}$ is an arbitrary set of positive roots satisfying
$c = R(\tau_1) \dots R(\tau_n)$.  We are interested in the case
\[ \tau_i = \left\{
\begin{array}{ll}
\theta_i & \mbox{ for } 1 \le i \le n-k,\\
\delta_{i-n+k}& \mbox{ for } n-k+1 \le i \le n,
\end{array}
\right.
\]
so that the $\tau_i$ are positive but may not be in increasing order even though the
subsets $\{\delta_1, \dots \delta_k\}$ and
$\{\theta_1, \dots \theta_{n-k}\}$ are in increasing order.
We define
\[\epsilon_i = R(\tau_1)\dots R(\tau_{i-1})\tau_i \ \ \ \
\mbox{and}\ \ \ \ \eta_i = R(\tau_n)\dots R(\tau_{i+1})\tau_i.\]
As in section \ref{AXmap} we can define the non-crossing partitions
\[a_i = R(\tau_1)\dots R(\tau_i)\ \ \ \ \mbox{and}\ \ \ \
b_i = R(\tau_i)\dots R(\tau_n).\]
Thus $\epsilon_i = -a_i(\tau_i)$ and
$\eta_i = -b_i^{-1}(\tau_i)$.  Moreover, since $c = a_iR(\tau_i)b_i$,
we have $c(\eta_i) = -a_i R(\tau_i)[\tau_i] = a_i[\tau_i] = -\epsilon_i$.
We deduce from
\[ R(\epsilon_i) = R(\tau_1)\dots R(\tau_{i-1})R(\tau_i)R(\tau_{i-1}) \dots R(\tau_1)\]
that $R(\epsilon_i)c =
R(\tau_1)\dots R(\tau_{i-1})R(\tau_{i+1})\dots R(\tau_n)$ and hence, by Lemma~2.2 of
\cite {ABMW} that $\mu(\epsilon_i)$ is orthogonal to
$\tau_j$ for $j \ne i$.  Also, by Lemmas~2.3 and 2.4 of \cite{ABMW},
\begin{eqnarray*}
\tau_i \cdot \mu(\epsilon_i) &=& \tau_i \cdot \mu[-a_i(\tau_i)]\\
&=& -\tau_i \cdot a_i(\mu[\tau_i])\\
&=& -\tau_i \cdot(\mu[\tau_i]-2\tau_i)\\
&=& -1+2\\
&=& 1.
\end{eqnarray*}
Thus $\mu(\epsilon_i)$ lies on each of the hyperplanes $\tau_j^{\perp}$ for
$j \ne i$ and on the positive side of $\tau_i^{\perp}$.
Since $c(\eta_i) = -\epsilon_i$, it follows that $\mu(c[\eta_i])$ lies on each of
the hyperplanes $\tau_j^{\perp}$ for $j \ne i$ but on the negative
side of $\tau_i^{\perp}$.
\vskip .2cm
Now, suppose
\[ \tau_i = \left\{
\begin{array}{ll}
\theta_i & \mbox{ for } 1 \le i \le n-k,\\
\delta_{i-n+k}& \mbox{ for } n-k+1 \le i \le n,
\end{array}
\right.
\]
corresponding to the factorisation
\[c = (cw^{-1})c = R(\theta_1)\dots R(\theta_{n-k})R(\delta_1)\dots R(\delta_k),\]
where $\{\delta_1, \dots , \delta_k\}$ is the simple system for the parabolic
subgroup $W_w$
and $\{\theta_1, \dots , \theta_{n-k}\}$ is the simple system for the
parabolic $W_{cw^{-1}}$.
The ray of $F(w)$ which is opposite the $\theta_i^\perp$ wall and
on its positive side is generated by $\mu(\epsilon_{i})$, while the ray of $F(w)$
which is opposite the $\delta_i^\perp$ wall and on its negative
side is generated by $\mu(c(\eta_{n-k+i}))$.
We deduce that the rays of $F(w)$ are generated by
\[\{\mu(\epsilon_1), \dots , \mu(\epsilon_{n-k}), \mu[c(\eta_{n-k+1})], \dots ,
\mu[c(\eta_n)]\}.\]
To conclude, we note that the roots $\epsilon_1,
\dots, \epsilon_{n-k}$
are the vertices of the lexicographically first facet of
$X(cw^{-1})$ and the roots $\eta_{n-k+1}, \dots, \eta_{n}$ are the
vertices of the lexicographically last
facet of $X(w)$,
by propositions~3.6 and 3.14 of \cite{ABMW}.
\end{proof}
\begin{corollary}
For each $w \in NCP_c$ the rays of $F(w)$ are generated by  a
subset of the set of vertices
of $\mu(AX(c))$.
\end{corollary}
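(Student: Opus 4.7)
The plan is to combine the explicit description of the rays of $F(w)$ given by Proposition~\ref{p:vertices} with the description of the vertex set of $\mu(AX(c))$ coming from Proposition~\ref{p:MAX}. By Proposition~\ref{p:vertices}, it suffices to show that each of $\epsilon_1,\dots,\epsilon_{n-k}$ and each of $c(\eta_{n-k+1}),\dots,c(\eta_n)$ equals $\rho_i$ for some $i$ with $1\le i\le nh/2+n$.

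First, I would observe that $\{\epsilon_1,\dots,\epsilon_{n-k}\}$ is the vertex set of the first facet of $X(cw^{-1})$ and $\{\eta_{n-k+1},\dots,\eta_n\}$ is the vertex set of the last facet of $X(w)$. By the definition of $EX(c)$ recalled in Section~\ref{pre}, every vertex of $X(v)$ (for $v\in\mbox{NCP}_c$) is a positive root, hence lies in $\{\rho_1,\dots,\rho_{nh/2}\}$. Therefore each $\mu(\epsilon_j)$ already belongs to the vertex set $\{\mu(\rho_1),\dots,\mu(\rho_{nh/2+n})\}$ of $\mu(AX(c))$.

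For the remaining generators, I would use the identity $c(\rho_j)=\rho_{j+n}$ from Section~\ref{pre}. Since each $\eta_i$ equals some $\rho_j$ with $1\le j\le nh/2$, we get $c(\eta_i)=\rho_{j+n}$ with $n+1\le j+n\le nh/2+n$, so $\mu(c(\eta_i))$ is again one of $\mu(\rho_1),\dots,\mu(\rho_{nh/2+n})$. No cyclic wrapping occurs because $j+n$ stays within the declared index range. Combining the two cases gives the corollary.

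There is essentially no obstacle here: the result is a bookkeeping consequence of Proposition~\ref{p:vertices} together with the fact that $c$ shifts indices of the $\rho_i$ by $n$. The only small point requiring a brief mention is that the upper index $nh/2+n$ in Proposition~\ref{p:MAX} is precisely what is needed to absorb the shift applied to the $\eta_i$, which is why the vertex set of $\mu(AX(c))$ is larger than that of $\mu(EX(c))$.
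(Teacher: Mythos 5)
Your proposal is correct and follows the paper's own argument exactly: cite Proposition~\ref{p:vertices} for the generators of the rays, note that the $\epsilon_j$ and $\eta_i$ are positive roots, and use $c(\rho_j)=\rho_{j+n}$ to place the $c(\eta_i)$ within $\{\rho_{n+1},\dots,\rho_{nh/2+n}\}$. The extra bookkeeping you spell out about index ranges is just a more explicit version of the paper's one-line conclusion.
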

\label{c:vertices}
\begin{proof}
By Proposition~\ref{p:vertices} the rays
of $F(w)$ are generated by
\[\{\mu(\epsilon_1), \dots , \mu(\epsilon_{n-k}), \mu[c(\eta_{n-k+1})], \dots ,
\mu[c(\eta_n)]\},\]
where $\{\eta_{n-k+1}, \dots ,
\eta_n \}$ is the vertex set of the last facet of $X(w)$ and
$\{\epsilon_1, \dots , \epsilon_{n-k}\}$ is the vertex set of the first facet of
$X(cw^{-1})$.  Since $c \rho_i = \rho_{i+n}$, the rays of $F(w)$ are generated
by a subset of the set
$\{\mu(\rho_1), \dots , \mu(\rho_{nh/2+n})\}$.
\end{proof}
\bigskip
\noindent \emph{Proof of Theorem \ref{thm0}.}  We wish to show that
the set of simplicial cones $\{F(w)\}$, where $w$ ranges over the elements of
$\mbox{NCP}_c$
is precisely the set of cones on simplices of $\mu(AX(c))$.  If $w \in \mbox{NCP}_c$,
then by Proposition~\ref{p:vertices},
the rays  of $F(w)$ are generated by
\[V = \{\mu(\epsilon_1), \dots , \mu(\epsilon_{n-k}), \mu[c(\eta_{n-k+1})], \dots ,
\mu[c(\eta_n)]\},\]
where $\{\eta_{n-k+1}, \dots ,
\eta_n \}$ is the vertex set of the last facet of $X(w)$ and
$\{\epsilon_1, \dots , \epsilon_{n-k}\}$ is the vertex set of the first facet of
$X(cw^{-1})$.
On the other hand, by Corollary~\ref{c:AXbij} with $v = cw^{-1}$, there is a facet
of $AX(c)$ whose vertex set is the union of the vertices of the first facet
of $X(cw^{-1})$ and the images under $c$ of the vertices of the last facet of $X(w)$.
Since $\mu(AX(c))$ is the image of $AX(c)$ under the action of $\mu$, the complex
$\mu(AX(c))$ has a facet with vertex set $V$.  Since every facet of $\mu(AX(c))$ arises in
this way by the bijectivity of $\phi$ and the invertibility of
the linear transformation $\mu$, the set of $F(w)$'s
coincides with the set of cones on simplices of $\mu(AX(c))$.\qed
\begin{theorem}
The fan determined by the cones $F(w)$ for $w \in {NCP}_c$ coincides with
the $c$-Cambrian fan.
\label{t:cambrian}
\end{theorem}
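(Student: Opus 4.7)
The plan is to argue in two steps: (i) the collection $\{F(w)\}$ forms a coarsening of the fan cut out by the reflecting hyperplanes of $W$, and (ii) this coarsening is precisely the $c$-Cambrian one.

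For step (i), I would rely on the statement, already advertised in the introduction and made explicit during the construction of $\mu(AX(c))$, that the codimension-one simplices of $\mu(AX(c))$ are pieces of reflecting hyperplanes and that each facet of $\mu(AX(c))$ is a union of permutahedron facets. Passing to cones, every wall of every $F(w)$ lies in a reflecting hyperplane and each $F(w)$ is a union of Weyl chambers. Combined with Theorem~\ref{thm0}, this shows $\{F(w)\}$ is a complete simplicial fan that coarsens the Coxeter fan.

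For step (ii), I would invoke the characterisation of the $c$-Cambrian fan from \cite{RS1}. Reading's bijection identifies $c$-sortable elements with $\mbox{NCP}_c$, and under this identification the maximal Cambrian cone attached to $w\in\mbox{NCP}_c$ is bounded by the simple roots $\delta_i$ of the parabolic $W_{w}$ (on the negative side) together with the simple roots $\theta_j$ of $W_{cw^{-1}}$ (on the positive side). These are precisely the defining half-spaces of $F(w)$. Since both the Cambrian fan and $\{F(w)\}$ coarsen the Coxeter fan and their maximal cones coincide as subsets of ${\bf R}^n$, the fans are equal.

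The main obstacle will be step (ii): matching the Cambrian cone attached to a $c$-sortable element with $F(w)$ requires threading Reading's bijection from sortable elements to $\mbox{NCP}_c$ and then re-reading off the defining hyperplanes. A cleaner alternative, more in the spirit of the present paper, is to compose two known linear isomorphisms: the map $\mu\circ c_+$ from the cluster fan $EX(c)$ to $\mu(AX(c))$ produced in the proof of Theorem~\ref{thm0}, and the Reading--Speyer linear isomorphism from the cluster fan to the Cambrian fan established in \cite{RS1} for bipartite $c$. Checking that these agree on the initial facet $\{\rho_1,\dots,\rho_n\}$ of $EX(c)$ (where each carries this facet to the cone on $\{\mu(\rho_1),\dots,\mu(\rho_n)\}$, the Weyl chamber $F(e)$) reduces the problem to a single linear-algebraic verification, after which linearity forces equality of the two fans.
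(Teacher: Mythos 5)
Your ``cleaner alternative'' is exactly the route the paper takes: the proof of Theorem~\ref{t:cambrian} consists of showing that the Reading--Speyer linear isomorphism $L$ from the cluster fan to the $c$-Cambrian fan coincides, up to the scalar $1/2$, with $\mu\circ c_+$, whence the Cambrian fan is the fan of $\mu(AX(c))$, i.e.\ the fan of the $F(w)$'s by Theorem~\ref{thm0}. Your step (i) and your first sketch of step (ii) (threading the sortable-elements bijection and re-reading off the walls of each Cambrian cone) are not used in the paper and, as you anticipate, would be the hard way round. One point of precision in your reduction: knowing that $L$ and $\mu\circ c_+$ each carry the cone on $\{\rho_1,\dots,\rho_n\}$ to the cone on $\{\mu(\rho_1),\dots,\mu(\rho_n)\}$ does not by itself force the two linear maps (hence the two image fans) to agree, since two linear isomorphisms can map one simplicial cone onto another while permuting and rescaling its rays. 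What is needed, and what the paper actually verifies, is vector-by-vector agreement on a basis: it writes out $L(\alpha_i)=\mp\omega_i$, uses $\omega_i=(1/2)\mu(\rho_i)$, and compares with the explicit formulas for $c_+(\alpha_i)$ and $\rho_i$ to conclude $L=(1/2)(\mu\circ c_+)$ as linear maps. With that substitution your plan matches the paper's proof.
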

\begin{proof} The authors of \cite{RS1} exhibit a linear isomorphism $L$ from the
$c$-cluster fan of a bipartite Coxeter element~$c$ to the $c$-Cambrian fan.
We show that, up to scalar multiple, this map $L$ coincides with $\mu \circ c_+$.
Indeed the map $L$ is defined on the basis $\{\alpha_1, \dots , \alpha_n\}$ by
\[
\alpha_i \mapsto \left\{
\begin{array}{cl}
-\omega_i &\mbox{ for } i = 1, \dots, s\\
\omega_i &\mbox{ for } i = s+1, \dots , n.
\end{array}
\right.
\]
Here $\{\omega_1, \dots , \omega_n\}$ is the dual basis to the basis of coroots
$\{\alpha^{\vee}_i\}$ where
\[\alpha^{\vee}_i = 2\alpha_i/(\langle \alpha_i , \alpha_i\rangle).\]
Since we have chosen our simple roots to have unit length, the
coroot $\alpha^{\vee}$ is simply $2\alpha_i$ and the `weight' $\omega_i$
is simply $(1/2)\mu(\rho_i)$ by section 3 of \cite{BW}.
However, by (\ref{action1}) and (\ref{action2}) of section \ref{AX(c)} above
\[
c_+(\alpha_i) = \left\{
\begin{array}{cl}
-\alpha_i &\mbox{ for } i = 1, \dots, s\\
-c(\alpha_i) &\mbox{ for } i = s+1, \dots , n.
\end{array}
\right.
\]
  Recalling from \cite{BW} that
\[
\rho_i = \left\{
\begin{array}{cl}
\alpha_i &\mbox{ for } i = 1, \dots, s\\
-c(\alpha_i) &\mbox{ for } i = s+1, \dots , n.
\end{array}
\right.
\]
we see that $L$ coincides with $(1/2)(\mu \circ c_+)$.
\end{proof}
\begin{example}
Let $W$ be the group $C_3$ (or $B_3$) of symmetries of the cube in ${\bf R}^3$.
The type $C_n$ generalised associahedron is known as the cyclohedron.
We can choose a simple system
\[\alpha_1 = (1,0,0),\  \  \alpha_2 = (\sqrt 2/2)(0,1,-1), \ \ \alpha_3 = (\sqrt 2/2)(-1,0,1)\]
so that the dual basis is
\[\mu(\rho_1) = (1,1,1), \ \ \mu(\rho_2) = \sqrt 2(0,1,0),\ \ \mu(\rho_3) = \sqrt 2(0,1,1).\]
Here the Coxeter element is the orthogonal transformation defined by $c(x,y,z) = (-z,x,y)$,
so that $h = 6$, $nh/2 = 9$ and $nh/2+n = 12$.  The complex $\mu(AX(c))$ is shown
in Figure~\ref{f:cyclo}, where the 2-sphere has been stereographically projected
onto the plane from the point $(-1,0,1)$.  Only the vertices $\mu(\rho_1)$ and
$\mu(\rho_{12})$ are labelled in the figure, but the other vertices occur consecutively
on the dotted polygonal path between the labelled pair.  The reflecting hyperplanes
intersect the sphere in circles and segments of these circles form the edges of facets
of $\mu(AX(c))$.  The position of a particular hyperplane can be deduced from the fact that
$\rho_i^\perp$ passes through $\mu(\rho_{i+1})$ and $\mu(\rho_{i+2})$ since
\[c = R(\rho_{i+2})R(\rho_{i+1})R(\rho_i), \ \ \ \mbox{for} \ \ 1 \le i \le 9. \]
The figure also incorporates the map $\phi' = \phi \circ \mu^{-1}$ defined by the bijection $\phi$
from section \ref{AXmap}.  Each $\mu(AX(c))$ region $F'$
is labelled by a set of integers, the corresponding positive
roots forming the simple system for $c[\phi'(F')]^{-1}$.  Thus the set of
integers labelling a region $F'$ corresponds
to a subset of the walls of $F'$ with a wall contributing to the subset if and only
if $F'$ lies on the negative side of the wall.
\begin{figure}[htb]
\setlength{\unitlength}{2cm}
\begin{picture}(7,6)(-2,-3.2)
\qbezier(0,2.4142)(.5858,2.4142)(1,2)
\qbezier(1,2)(1.4142,1.5858)(1.4142,1)
\qbezier[40](1.4142,1)(1.4142,.4142)(1,0)
\qbezier[30](.57735,.81649)(1,.5176)(1,0)
\qbezier(0,1)(.317837245,1)(.57735,.81649)
\qbezier[10](0,0)(.1,0)(0.3178,0)
\qbezier(0.3178,0)(.7,0)(1,0)
\qbezier(0,0)(0,.2)(0,0.4142)
\qbezier[20](.57735,.81649)(.57735,.63295)(.4714,.3333)
\qbezier(.4714,.3333)(.41004,.1597)(0.3178,0)
\qbezier(1.412,1)(2.5685,1)(3.146,0)
\qbezier(.57735,.81649)(.97567,1)(1.412,1)
\qbezier(.57735,.81649)(.25272,.66693)(0,0.4142)
\qbezier[20](.4714,.3333)(.24255,.4142)(0,0.4142)
\qbezier(.4714,.3333)(.7735,.2265)(1,0)
\qbezier[15](0,0.4142)(0.186184747,0.228028814)(0.3178,0)
%
\qbezier(0,2.4142)(-0.5858,2.4142)(-1,2)
\qbezier(-1,2)(-1.4142,1.5858)(-1.4142,1)
\qbezier(-1.4142,1)(-1.4142,.4142)(-1,0)
\qbezier(-0.57735,.81649)(-1,.5176)(-1,0)
\qbezier(0,1)(-0.317837245,1)(-0.57735,.81649)
\qbezier(0,0)(-0.1,0)(-0.3178,0)
\qbezier(-0.3178,0)(-0.7,0)(-1,0)
\qbezier(-0.4714,.3333)(-0.24255,.4142)(0,0.4142)
\qbezier(-0.4714,.3333)(-0.7735,.2265)(-1,0)
\qbezier(0,0.4142)(-0.186184747,0.228028814)(-0.3178,0)
%
\qbezier(.57735,-0.81649)(1,-0.5176)(1,0)
\qbezier(0,-1)(.317837245,-1)(.57735,-0.81649)
\qbezier[15](0,0)(0,-0.2)(0,-0.4142)
\qbezier(.4714,-0.3333)(.24255,-0.4142)(0,-0.4142)
\qbezier(.4714,-0.3333)(.7735,-0.2265)(1,0)
\qbezier(0,-0.4142)(0.186184747,-0.228028814)(0.3178,0)
\qbezier(0,-2.4142)(.5858,-3)(1.4142,-3)
\qbezier(1.4142,-3)(2.2426,-3)(2.8284,-2.4142)
\qbezier(2.8242,-2.4142)(3.9135,-1.3291)(3.146,0)
\qbezier[30](-0.57735,-0.81649)(-1,-0.5176)(-1,0)
\qbezier(0,-1)(-0.317837245,-1)(-0.57735,-0.81649)
\qbezier(-0.57735,-0.81649)(-0.57735,-0.63295)(-0.4714,-0.3333)
\qbezier[12](-0.4714,-0.3333)(-0.41004,-0.1597)(-0.3178,0)
\qbezier(-0.57735,-0.81649)(-0.25272,-0.66693)(0,-0.4142)
\qbezier(-0.4714,-0.3333)(-0.24255,-0.4142)(0,-0.4142)
\qbezier[20](-0.4714,-0.3333)(-0.7735,-0.2265)(-1,0)
\qbezier[15](0,-0.4142)(-0.186184747,-0.228028814)(-0.3178,0)
\qbezier(-0.57735,-0.81649)(-0.62334,-1.31557)(-0.42853,-1.77736)
\qbezier(-0.42853,-1.77736)(-0.186184747,-2.228028814)(0,-2.4142)
\put(0,0){\circle*{.1}}
\put(-0.57735,-0.81649){\circle*{.1}}
\put(-1.77735,-1.59649){$\mu(\rho_1)$}
\put(-1.2,-1.4){\vector(1,1){0.5}}
\put(2.1142,1.7){$\mu(\rho_{12})$}
\put(2.0,1.6){\vector(-1,-1){0.5}}
\put(0.57735,0.81649){\circle*{.1}}
\put(1,0){\circle*{.1}}
\put(-1,0){\circle*{.1}}
\put(0,0.4142){\circle*{.1}}
\put(0,-0.4142){\circle*{.1}}
\put(0.3178,0){\circle*{.1}}
\put(-0.3178,0){\circle*{.1}}
\put(1.4142,1){\circle*{.1}}
\put(.4714,.3333){\circle*{.1}}
\put(-0.4714,-0.3333){\circle*{.1}}
\put(-0.7714,-0.4833){$\emptyset$}
\put(-0.6914,-0.1833){{\tiny 1}}
\put(-0.3814,-0.5833){{\tiny 2}}
\put(-0.3814,-0.3233){{\tiny 1,2}}
\put(-0.1614,-0.1833){{\tiny 3}}
\put(0.0714,-0.1833){{\tiny 4}}
\put(-0.4914,0.0833){{\tiny 5}}
\put(-0.2514,0.0483){{\tiny 2,5}}
\put(0.0244,0.0483){{\tiny 4,5}}
\put(0.2514,0.1733){{\tiny 6}}
\put(-0.2514,0.5733){{\tiny 7}}
\put(0.3014,0.5033){{\tiny 2,7 }}
\put(0.1514,-0.7733){{\tiny 8}}
\put(0.3914,-0.1833){{\tiny 1,8}}
\put(0.4914,0.0533){{\tiny 5,8}}
\put(0.5714,0.3833){{\tiny 7,8}}
\put(-1.0714,-0.7833){{\tiny 9}}
\put(1.0,-1.0){{\tiny 2,9}}
\put(-0.2,1.5){{\tiny 1,9}}
\put(0.8714,0.5833){{\tiny 1,2,9}}
\end{picture}
\caption{The cyclohedron inside the $C_3$ permutahedron.}
\label{f:cyclo}
\end{figure}
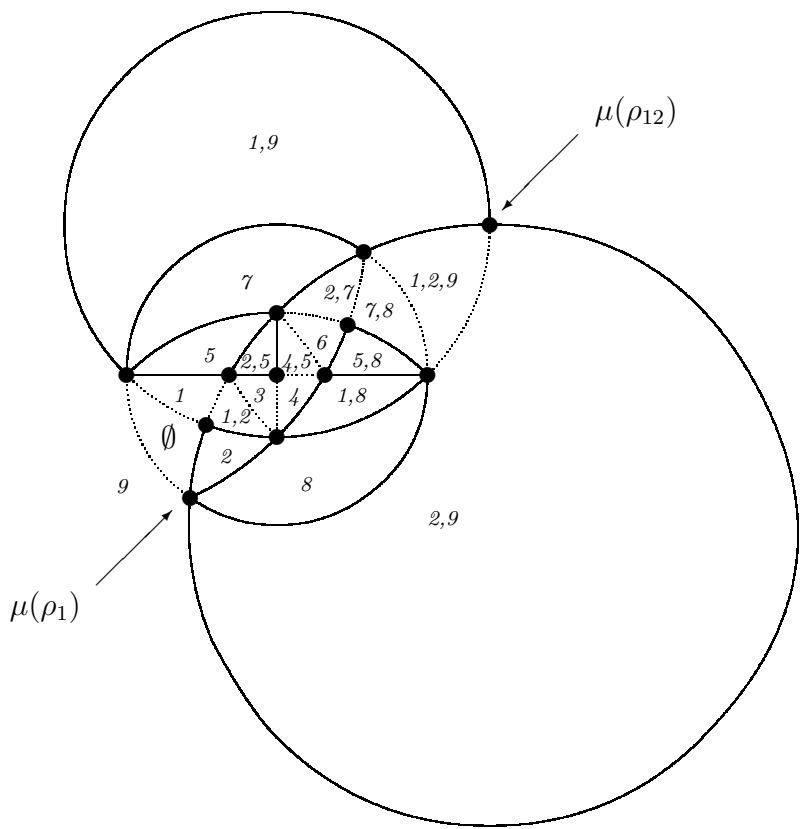
\label{e:cyclo}
\end{example}

\end{document}